\tikzset{
    auto,node distance =1 cm and 1 cm,semithick,
    state/.style ={ellipse, draw, minimum width = 0.7 cm}
}
\newcommand{\pp}{\mathcal{P}}
\DeclareMathOperator{\obs}{\textsc{Obs}}
\newcommand{\meow}[4]{\left\langle\mathcal{H}^{#4}\oplus\mathcal{E}^{{#1}-{#4}}\right\rangle^{#3}_{{#1},{#2}-1}}
\newtheorem{notation}{Notation}
\newtheorem{theorem}{Theorem}
\theoremstyle{definition}
\newtheorem{definition}{Definition}
\newtheorem{remark}{Remark}
\newtheorem{example}{Example}
\newtheorem{question}{Question}
\newtheorem{lemma}{Lemma}
\newtheorem{corollary}{Corollary}
\newtheorem{proposition}{Proposition}
\newtheorem{observation}{Observation}
\definecolor{kgreen}{HTML}{3BAA1B}
\begin{document}

\title{Counting Power Domination Sets in Complete $m$-ary Trees}

\author{Sviatlana Kniahnitskaya        \and
        Michele Ortiz    \and   Olivia Ramirez \and Katharine Shultis \and
        Hays Whitlatch
}

\maketitle

\begin{abstract}
Motivated by the question of computing the probability of successful power domination by placing $k$ monitors uniformly at random, in this paper we give a recursive formula to count the number of power domination sets of size $k$ in a labeled complete $m$-ary tree.  As a corollary we show that the desired probability can be computed in exponential with linear exponent time. 
\end{abstract}

\section{Introduction}\label{intro}

The study of power domination sets arises from the monitoring of electrical network using Phase Measurement Units (PMUs or monitors).  This problem was first studied in terms of graphs in \cite{haynes2002domination} in 2002 and has been a topic of much interest since then (see e.g. \cite{baldwin1993power, benson2018zero, brueni2005pmu, ferrero2011power, zhao2006power}).  A PMU placed at a network node measures the voltage at the node and all current phasors at the node \cite{baldwin1993power}, and subsequently measures the voltage at some neighboring nodes using the propagation rules described in Definition \ref{PowerDomination}.  Since PMUs are expensive, it is desirable to find the minimum number of PMUs needed to monitor a network.  This problem is known to be to be NP-complete even for planar bipartite graphs (\cite{brueni2005pmu}). Since the cost of technology typically decreases but the cost of employment increases, it is feasible that the cost of placing extra PMUs is preferred to the cost of determining the minimum number of PMUs and an optimal placement.  Thus, in this paper, we begin to investigate how probable it is that a randomly placed set of $k$ PMUs will monitor a network.
\subsection{Terminology}
Let $T$ be a tree.  A \textit{rooted tree} is a tree in which one vertex has been designated the \textit{root}, and denoted $r(T)$, or simply $r$ when $T$ is clear from context. In a rooted tree, the \textit{parent} of a vertex $v$ is the vertex connected to $v$ on the path to the root. Since every vertex has a unique path to the root, every vertex other than the root has a unique parent. The root has no parent. A \textit{child} of a vertex $v$ is any vertex $w$ for which $v$ is the parent of $w$. A \textit{descendant} of a vertex $v$ is any vertex which is either the child of $v$ or is, recursively, the descendant of any of the children of $v$.

The \textit{height} of a vertex, $v$, in a rooted tree is the length of the longest path, not including $r$ as an internal vertex, to a leaf from that vertex, denoted $h(v)$. The height of the tree is the height of the root.

The \textit{complete $m$-ary tree of height $h$} is the tree of height $h$ satisfying that each internal vertex has $m$ children.  Throughout the literature this tree is also referred to as a full $m$-ary tree or a perfect $m$-ary tree.  We denote by $T_{m,h}$ the complete $m$-ary of height $h$ rooted at the center-most vertex, $r$.

For the purpose of counting power domination sets in a complete $m$-ary tree we will introduce a  new concept.  The \textit{extended $m$-ary tree of height $h$}, denoted $T^+_{m,h}$, is formed by adding an additional vertex, $r'$, and edge $\{r,r'
\}$ to the root to the tree $T_{m,h}$. That is, $T^+_{m,h}$ has vertex set $V=V(T_{m,h})\cup \{r'\}$ and edge set $E=E(T_{m,h})\cup \{\{r,r'\}\}$.  We refer to the added vertex $r'$ as the \textit{stem} of the tree. 

Suppose $G$ is a complete $m$-ary tree or an extended $m$-ary tree.  If $r_i$ is a child of the root $r$ we let $V_i$ be the descendants of $r_i$  and let $G_i$ be the induced subgraph of $G$ on $V_i\cup\{r_i,r\}$, that is $G_i=G[V_i\cup\{r_i,r\}]$.  Observe that $G_i$ is an extended $m$-ary tree of height $h-1$ with root $r_i$ and stem $r$.  In our proofs we will label the children of $r$ by $r_1, r_2,\ldots, r_m$ and refer to $G_i$ as the \textit{$i^\textrm{th}$ extended subtree of $G$}.

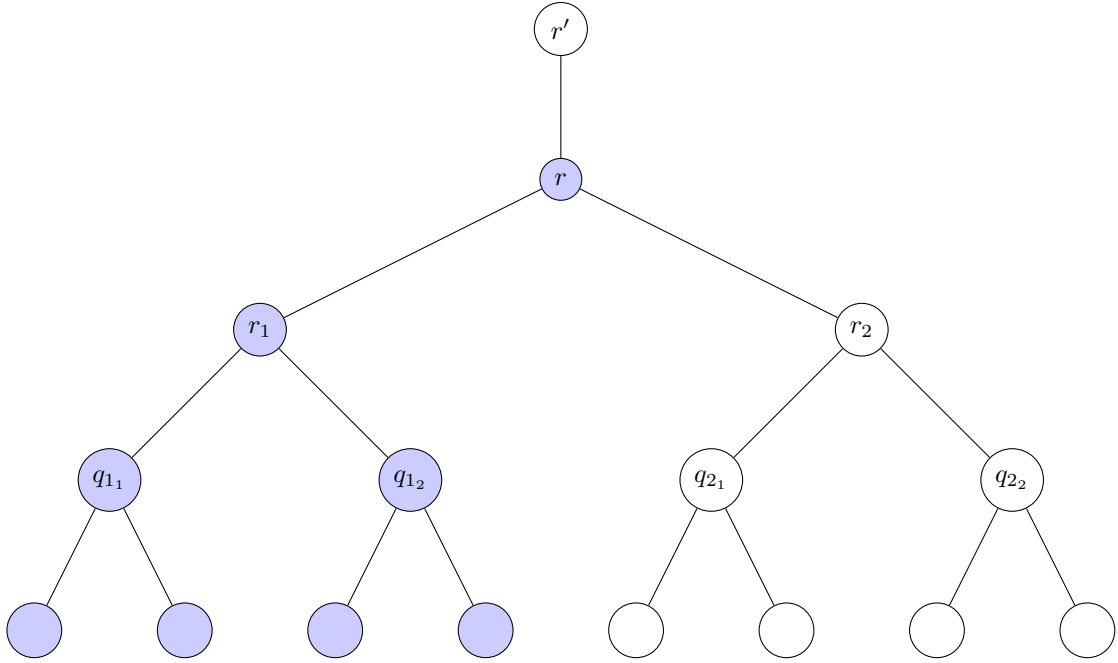
\begin{figure}[ht]
    \centering
    \begin{tikzpicture}[auto=center,every node/.style={circle,fill=blue!20}]
        \node[fill=white, draw] (r') at (7,8) {$r'$};
        \node[draw] (r) at (7,6) {$r$};
        \node[draw] (r1) at (3,4) {$r_1$};
        \node[fill=white, draw] (r2) at (11,4) {$r_2$};
        \node[draw] (q1) at (1,2) {$q_{1_1}$};
        \node[draw] (q2) at (5,2) {$q_{1_2}$};
        \node[fill=white, draw] (q3) at (9,2) {$q_{2_1}$};
        \node[fill=white, draw] (q4) at (13,2) {$q_{2_2}$};
        \node[draw] (p1) at (0,0) {\phantom{$p_1$}};
        \node[draw] (p2) at (2,0) {\phantom{$p_2$}};
        \node[draw] (p3) at (4,0) {\phantom{$p_3$}};
        \node[draw] (p4) at (6,0) {\phantom{$p_4$}};
        \node[fill=white, draw] (p5) at (8,0) {\phantom{$p_5$}};
        \node[fill=white, draw] (p6) at (10,0) {\phantom{$p_6$}};
        \node[fill=white, draw] (p7) at (12,0) {\phantom{$p_7$}};
        \node[fill=white, draw] (p8) at (14,0) {\phantom{$p_8$}};
        \draw (r') -- (r);
        \draw (r) -- (r1);
        \draw (r) -- (r2);
        \draw (r1) -- (q1);
        \draw (r1) -- (q2);
        \draw (r2) -- (q3);
        \draw (r2) -- (q4);
        \draw (q1) -- (p1);
        \draw (q1) -- (p2);
        \draw (q2) -- (p3);
        \draw (q2) -- (p4);
        \draw (q3) -- (p5);
        \draw (q3) -- (p6);
        \draw (q4) -- (p7);
        \draw (q4) -- (p8);
    \end{tikzpicture}
    \caption{$T^+_{2,3}$: Extended binary tree of height 3 with highlighted $1^{\textrm{st}}$ extended subtree of $T^+_{2,3}$}
    \label{fig:t23+}
\end{figure}

For any positive integer $r$, we will use $[r]$ to denote the set $\{1,2,\ldots,r\}$, and for any positive integers $q,r$ with $q\leq r$, we will use $[q,r]$ to denote the set $\{q,q+1,\ldots,r\}.$


\subsection{Power Domination}\label{PowerDom}

\begin{definition}[Graph Power  Domination]\label{PowerDomination}
Let $G=(V,E)$ and $S\subseteq V$.  Set $\pp^0(S)=N[S]$ (the closed neighborhood of $S$) and for $k\geq 1$ we let $\pp^k(S)=\pp^{k-1}(S)\cup N^*\left(\pp^{k-1}(S)\right)$ where $$N^*\left(\pp^{k-1}(S)\right)= \bigcup\limits_{v\in \pp^{k-1}(S)}\left\{x\in V :  N_G(v)\setminus \pp^{k-1}(S)=\{x\}\right\}$$
That is, $x\in N^*(A)$ if there exists some $a\in A$ such that $x$ is the only neighbor of $a$ not in $A$.  For a finite graph we see that eventually $\pp^k(S)=\pp^{k+1}(S)$ and we denote this by $\pp^{\infty}(S)$.  If $\pp^{\infty}(S)=V$ then we say that $S$ is a \textit{power dominating set} for $G$.  The minimum cardinality of a power dominating set for $G$ is referred to as the \textit{power dominating number} of $G$ and is denoted by $\gamma_P(G)$.
\end{definition}
\begin{definition}[Forcing Pairs]
Let $G=(V,E)$ and $S\subseteq V$ be given.  A pair $(x,y)$ is a \textit{forcing pair} for $S$ in $G$ if for some $k\geq 0$, $x\in \mathcal{P}^k(S)$ and $N_G(x)\setminus \mathcal{P}^k(S)=\{y\}$.  We denote this by $x \xrightarrow{G,S} y$, or simply by $x \xrightarrow{} y$ when $G$ and $S$ are clear from context. We also say \textit{$x$ forces $y$}.  The ordered set $(x_1, x_2, \ldots, x_k)$ is a \textit{forcing chain} for $S$ in $G$ if $x_i\xrightarrow{G,S} x_{i+1}$ for all $i\in [k-1]$.  This may be denoted by  $x_1 \xrightarrow{G,S} x_2 \xrightarrow{G,S}\cdots \xrightarrow{G,S} x_k$, dropping the superscripts when clear by context.
\end{definition}

\begin{remark}
Note that it is possible for a vertex to be forced by more than one vertex. Let $G$ be the graph with three vertices, $x$, $x'$, and $y$, and two edges, namely $\{x,y\}$ and $\{x',y\}$. If $S=\{x,x'\}$, then we have both $x\xrightarrow{G,S}y$ and $x'\xrightarrow{G,S}y$.
\end{remark}

\section{Power Domination in Complete $m$-ary Trees}\label{powerdom}
The following terminology will be central to our counting argument. 

\begin{definition}
Let $m\geq 2$, $h\geq 0$ and $G=T^{+}_{m,h}$. We say $S\subseteq V(G)\setminus\{r'\}$ is 
\begin{itemize}
    \item Type I if $S$ is a power dominating set for $G$;
    \item Type II if $S$ is not a power dominating set for $G$, but $S\cup\{r'\}$ is a power dominating set for $G$;
    \item Type 0 otherwise.
\end{itemize} 
We let $E_{h,m}^k$ denote the number of Type I sets of size $k$ that exist for $T^{+}_{m,h}$ and $H_{h,m}^k$ denote the number of Type II sets of size $k$ that exist for $T^{+}_{m,h}$.
\end{definition}
The letter $H$ is used to remind us that the set in question needs \textit{help} from $r'$ to successfully power-dominate and the letter $E$ is used to remind us that the set in question will successfully power-dominate $G-r'$ then \textit{exit} to assist in power-dominating $r'$.

\begin{notation}\label{children}
Let $G$ be an $m$-ary extended tree, with root $r$, stem $r'$, and extended subtrees $\{G_i\}_{i\in [d]}$, and let $S\subseteq V(G)\setminus\{r'\}$. 
We let \textsc{Obs}($G,S$) denote the set of vertices observed in the (attempted) power domination of $G$ by $S$. 

\end{notation}

\begin{proposition}\label{ZerosPersist}
Let $G$ be an extended $m$-ary tree and let $S\subseteq V(G)\setminus\{r'\}$.  If $S^+=S\cup\{r'\}$ is a power dominating set for $G$ then $S_i^+=(S\cap V(G_i))\cup\{r\}$ is a power dominating set for $G_i$ for all $i\in [m]$.
\end{proposition}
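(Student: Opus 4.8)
The plan is to reduce the statement to a single containment between the two observation processes and then prove that containment by induction on the propagation round. Write $\mathcal{P}^k(S^+)$ for the $k$-th propagation set in $G$ started from $S^+$. Since $S^+$ power dominates $G$ we have $\mathcal{P}^\infty(S^+)=V(G)\supseteq V(G_i)$, so it suffices to show $\obs(G_i,S_i^+)=V(G_i)$, and for that it is enough to prove the localization claim
$$\mathcal{P}^k(S^+)\cap V(G_i)\subseteq \obs(G_i,S_i^+)\qquad\text{for every } k\geq 0,$$
and then let $k\to\infty$.

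Before the induction I would record the structural features of $G_i$ as a subgraph of $G$ that drive the argument. First, because $G$ is a tree and $G_i=G[V_i\cup\{r_i,r\}]$, the vertex $r$ is a cut vertex and is the \emph{only} vertex of $G_i$ having a neighbor in $V(G)\setminus V(G_i)$; equivalently, for every $v\in V(G_i)\setminus\{r\}$ we have $N_G(v)=N_{G_i}(v)\subseteq V(G_i)$, while $N_{G_i}(r)=\{r_i\}$ even though $N_G(r)=\{r',r_1,\ldots,r_m\}$. Second, $r$ is observed at the very first round in $G$, since $r'\in S^+$ and $r\in N(r')$, so $r\in\mathcal{P}^0(S^+)$; moreover $r\in S_i^+$ by definition, so $r\in\obs(G_i,S_i^+)$ from the outset.

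For the base case, a vertex $x\in N_G[S^+]\cap V(G_i)$ is either $r$ (already handled), or a vertex of $S$ lying in $V(G_i)$ (hence in $S_i^+$), or a $G$-neighbor of some $s\in S^+$; in the last case the structural fact forces $s\in V(G_i)$ and $s\neq r'$, so $s\in S_i^+$ and the edge $\{x,s\}$ survives in $G_i$, giving $x\in N_{G_i}[S_i^+]$. For the inductive step, suppose $x\in\bigl(\mathcal{P}^{k+1}(S^+)\cap V(G_i)\bigr)\setminus\mathcal{P}^k(S^+)$ is forced by some $v$ with $N_G(v)\setminus\mathcal{P}^k(S^+)=\{x\}$. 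If $v\in V(G_i)\setminus\{r\}$ then $N_G(v)=N_{G_i}(v)$, so by the inductive hypothesis $v$ and all its $G_i$-neighbors other than $x$ already lie in $\obs(G_i,S_i^+)$, whence $v$ forces $x$ in $G_i$ (unless $x$ was already observed). If $v=r$ then $x\in N_G(r)\cap V(G_i)=\{r_i\}$, and since $r\in\obs(G_i,S_i^+)$ has the single $G_i$-neighbor $r_i$, it forces $x=r_i$. Finally, the case $v\notin V(G_i)$ would force $x=r$, the unique boundary vertex, which is impossible because $r\in\mathcal{P}^0(S^+)\subseteq\mathcal{P}^k(S^+)$ contradicts $x\notin\mathcal{P}^k(S^+)$.

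The main obstacle I anticipate is precisely the mismatch between $N_G$ and $N_{G_i}$ at the cut vertex $r$: a naive ``restrict everything to $G_i$'' argument fails because $r$ loses neighbors when we pass to $G_i$, so a forcing move valid in $G$ need not be valid in $G_i$. The observation that rescues the argument is that this mismatch can only help, since $r$ is observed immediately from the stem, is the unique vertex of $G_i$ adjacent to the outside, and in $G_i$ has the single neighbor $r_i$; thus deleting $r$'s external neighbors only makes $r$ force $r_i$ more readily. Once this is isolated, the three cases above close the induction, and letting $k\to\infty$ yields $\obs(G_i,S_i^+)=V(G_i)$, as desired.
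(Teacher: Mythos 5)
Your proof is correct, and it takes a genuinely different route from the paper's. The paper argues by contradiction with an extremal choice: assuming $\obs(G_i,S_i^+)\neq V(G_i)$, it fixes an unobserved vertex $x\in V(G_i)$ at minimum distance from $r$, shows the vertex $y$ with $y\xrightarrow{G,S^+}x$ cannot be a child of $x$ (any forcing chain reaching such a child from outside $V(G_i)$ would have to pass through $x$ first, giving $x\xrightarrow{G,S^+}y$, a contradiction), concludes $y$ is the parent of $x$, and then reaches a contradiction through an unobserved sibling $x'$ of $x$, which would have to be forced by one of its own children --- forbidden by the same parent-only argument. You instead prove the round-by-round localization $\mathcal{P}^k(S^+)\cap V(G_i)\subseteq\obs(G_i,S_i^+)$ by induction on $k$, resting on exactly three structural facts: $r$ is the unique vertex of $G_i$ with a neighbor outside $V(G_i)$, $r\in S_i^+$, and $r\in\mathcal{P}^0(S^+)$ because $r'\in S^+$. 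Your three cases for the forcing vertex $v$ (in $V(G_i)\setminus\{r\}$, equal to $r$, outside $V(G_i)$) are all handled correctly; the only step you leave implicit is the standard stability fact that if $v\in\obs(G_i,S_i^+)$ and every $G_i$-neighbor of $v$ except $x$ is already observed, then $x\in\obs(G_i,S_i^+)$ --- immediate because the terminal set $\mathcal{P}^\infty$ admits no further forces --- and in the case $v=r$ your conclusion holds even more simply, since $r_i\in N_{G_i}[S_i^+]$ at round $0$. What your approach buys: it replaces the paper's delicate directional analysis of forcing chains in the tree (parent versus child, ancestors, siblings) with a monotone simulation argument at the cut vertex, and it actually proves something stronger and more general --- the containment $\obs(G,S^+)\cap V(G_i)\subseteq\obs(G_i,S_i^+)$ holds whether or not $S^+$ power dominates $G$, and the argument applies verbatim to any induced subgraph whose entire boundary is contained in the seed sets of both processes. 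What the paper's proof buys is stylistic continuity: its explicit reasoning about forcing chains and parent/child structure is the same toolkit reused in the proofs of Lemma \ref{hugelemma} and Theorem \ref{bigtheorem}, whereas your lemma-style containment would serve as a self-contained black box.
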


\begin{proof}
Choose and fix $S\subseteq V(G)\setminus\{r'\}$ and $i\in [m]$.  Since the neighborhoods of the sets $\{r\}$ and $\{r,r'\}$ are the same, it follows that $\textsc{Obs}(G, S\cup\{r, r'\})=\textsc{Obs}(G, S\cup\{r\}).$ Observe that the order in which power-domination occurs in the graph does not affect $\pp^{\infty}(S)$ and therefore we may assume that  $\textsc{Obs}(G, S^+)$ is computed by first exhausting power-domination moves resulting from the vertices of $G_i$.

Assume, by way of contradiction, that $\obs(G,S^+)=V(G)$ but $\obs(G_i, S_i^+)\neq V(G_i)$.  Choose and fix $x\in V(G_i)\setminus \obs(G_i, S_i^+)$ such that the distance from $x$ to $r$ is minimized. In particular the internal vertices of the path from $r$ to $x$ are all contained within $\obs(G_i, S_i^+)$.  Since $x\in \obs(G,S^+)$ there is some $y \in V(G)$ such that $y\xrightarrow{G,S^+} x$. Now $x\notin \{r,r_i\}$ since $r\in S_i^+$ and $r_i\in N_{G_i}(r)$. It follows that $y\in V(G_i)\setminus\{r\}$.

It must be the case that $y$ is either a child or the parent of $x$. We will show that $y$ is the parent of $x$. Assume to the contrary that $y$ is a child of $x$. Note that in $G_i$, with initial set $S_i^+$, the vertex $y$ doesn't force $x$. However, we do have that $y\xrightarrow{G,S^+} x$, and so there is a forcing chain $C=(v_1,v_2,\ldots,v_j,y)$ that starts outside of $V(G_i)$ and ends at $y$ that allows $y\xrightarrow{G,S^+} x $.  Since each of these paths must go through $x$ we have that $v_i=x$ for some $i\leq j$ and therefore  $x\xrightarrow{G,S^+} y$.  This would contradict that  $y\xrightarrow{G,S^+} x$ so we must proceed under the assumption that $y$ is the parent of $x$. 

By assumption, all of the ancestors of $x$ in $G_i$ are contained within $\obs(G_i, S_i^+)$.  Since $x\notin\obs(G_i,S_i^+)$, it follows that $y$ must have a second child, $x'$,  satisfying $x'\in V(G_i)\setminus \obs(G_i, S_i^+)$.  Hence $y \xrightarrow{G,S^+} x$ only after $x'$ is observed. The only way this can occur is if a child of $x'$ forces $x'$; however, the argument that $x$ must be forced by a parent and not a child could also be made for $x'$. This is a contradiction.
\end{proof}

\begin{notation}
Let $\left\langle\mathcal{H}^{\ell}\oplus \mathcal{E}^{m-\ell}\right\rangle_{m,h}^{k}=$
$$\sum\limits_{\substack{\substack{i_1\leq i_2 \leq\cdots\leq i_\ell\\ i_{\ell+1} \leq i_{\ell+2} \cdots\leq i_m}\\ i_1 + i_2+ \cdots+ i_m=k}}
 \binom{m}{\ell}\binom{\ell}{s_0, s_1, \ldots, s_k}\binom{m-\ell}{t_0, t_1, \ldots, t_k}\left(\prod\limits_{1\leq j\leq \ell} H_{m,h}^{i_j}\right)\left(\prod\limits_{\ell< j\leq m} E_{m,h}^{i_j}\right)$$
where $s_\alpha=\vert\{j\in [\ell] : i_j=\alpha\}\vert$ and $t_\beta=\vert\{j\in [\ell+1, m]: i_j=\beta\}\vert$.
\end{notation}

\begin{lemma}\label{biglemma}
Let $m\geq 2$, $h\geq 0$ and    $G=T^{+}_{m,h+1}$.  Then $\left\langle\mathcal{H}^{\ell}\oplus \mathcal{E}^{m-\ell}\right\rangle_{m,h}^{k}$ counts the number of ways to assign $k$ monitors to $V(G)\setminus\{r,r'\}$ so that exactly $\ell$ of the extended subtrees of $G$ receive Type II sets and the remaining $m-\ell$ extended subtrees receive Type I sets.
\end{lemma}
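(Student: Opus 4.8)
The plan is to recognize that $\left\langle\mathcal{H}^{\ell}\oplus \mathcal{E}^{m-\ell}\right\rangle_{m,h}^{k}$ is a purely combinatorial reorganization of a sum over monitor placements in the $m$ extended subtrees, so the proof reduces to a bijective decomposition of placements followed by bookkeeping of multiplicities. First I would establish the decomposition. Since $G=T^{+}_{m,h+1}$ has root $r$ with children $r_1,\ldots,r_m$ and stem $r'$, its vertex set decomposes as $V(G)\setminus\{r,r'\}=\bigsqcup_{i\in[m]}(V_i\cup\{r_i\})$, and each block $V_i\cup\{r_i\}$ is exactly $V(G_i)\setminus\{r\}$, the non-stem vertices of the $i^{\textrm{th}}$ extended subtree. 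Hence assigning $k$ monitors to $V(G)\setminus\{r,r'\}$ is the same as choosing a subset $S_i\subseteq V(G_i)\setminus\{r\}$ for each $i$ with $\sum_i|S_i|=k$. The key point is that whether $S_i$ is Type I, Type II, or Type 0 is an intrinsic property of the pair $(G_i,S_i)$, determined entirely within $G_i$; and, as noted when the extended subtrees were introduced, each $G_i$ is an extended $m$-ary tree of height $h$, i.e. $G_i\cong T^{+}_{m,h}$. Therefore the number of Type I (resp. Type II) sets of size $a$ in $G_i$ equals $E^{a}_{m,h}$ (resp. $H^{a}_{m,h}$), independent of $i$.

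With the decomposition in hand, I would write the target count directly: a placement meeting the requirement is specified by a choice of the set $A\subseteq[m]$, $|A|=\ell$, of subtrees receiving Type II sets, together with a size vector $(c_i)_{i\in[m]}$ summing to $k$, and then a Type II set of size $c_i$ in each $G_i$ with $i\in A$ and a Type I set of size $c_i$ in each $G_i$ with $i\notin A$. Summing the products of the corresponding per-subtree counts gives
$$\sum_{\substack{A\subseteq[m]\\ |A|=\ell}}\ \sum_{\substack{(c_i)_{i\in[m]}\\ \sum_i c_i=k}}\left(\prod_{i\in A}H^{c_i}_{m,h}\right)\left(\prod_{i\notin A}E^{c_i}_{m,h}\right).$$
Since all $G_i$ are isomorphic, the inner sum is identical for every $A$ of size $\ell$ (relabel the size vector via the bijection carrying one $A$ to another), so the $\binom{m}{\ell}$ choices of $A$ contribute exactly the factor $\binom{m}{\ell}$. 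It then remains to pass from the ordered sum over size vectors to the sorted ranges $i_1\le\cdots\le i_\ell$ and $i_{\ell+1}\le\cdots\le i_m$ appearing in the statement: each sorted Type II tuple arises from exactly $\binom{\ell}{s_0,\ldots,s_k}$ orderings, where $s_\alpha$ counts the entries equal to $\alpha$, and likewise each sorted Type I tuple from $\binom{m-\ell}{t_0,\ldots,t_k}$ orderings, which are precisely the multinomial coefficients in the formula.

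I expect the genuine content to lie not in the power-domination theory but in this final combinatorial accounting, so the main obstacle is verifying that the multinomial factors count orderings without over- or under-counting, i.e. that sorting the size vector and multiplying by $\binom{\ell}{s_0,\ldots,s_k}\binom{m-\ell}{t_0,\ldots,t_k}$ reproduces the ordered sum exactly once. The one conceptual point to state carefully, though it is essentially forced by the definitions, is the locality of the Type classification: because Type I/II/0 is defined relative to the subtree $G_i$ alone, the decomposition genuinely factorizes and the per-subtree counts $E^{\bullet}_{m,h}$ and $H^{\bullet}_{m,h}$ may be multiplied independently across $i$.
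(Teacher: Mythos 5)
Your proof is correct and takes essentially the same approach as the paper: both factor a placement over the $m$ extended subtrees (each isomorphic to $T^{+}_{m,h}$, with the Type of $S_i$ intrinsic to $G_i$), use $\binom{m}{\ell}$ for the choice of which subtrees receive Type II sets, and use the multinomials $\binom{\ell}{s_0,\ldots,s_k}$ and $\binom{m-\ell}{t_0,\ldots,t_k}$ to account for repeated part sizes. The only difference is direction: you derive the formula from the ordered double sum over $(A,(c_i))$ and then group by sorted size vectors, whereas the paper interprets each factor of the stated formula and then ``removes'' the non-decreasing restrictions --- the same bookkeeping, with yours arguably the cleaner write-up.
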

\begin{proof}
Let $m\geq 2$, $h\geq 0$ and    $G=T^{+}_{m,h+1}$.  
Choose and fix $i_1, i_2, \ldots, i_m$ so that, for some $\ell$ satisfying $0\leq \ell\leq m$, we have that $i_1\leq i_2 \leq\cdots\leq i_\ell$,   $i_{\ell+1} \leq i_{\ell+2} \cdots\leq i_m$,
and  $\sum_{j\in [m]}i_j=k$.  
Since $H_{m,h}^{i_j}$ and $E_{m,h}^{i_j}$ count the number of Type II and Type I (respectively) of size $i_j$ for $T^+_{m,h}$ then $$\left(\prod\limits_{1\leq j\leq \ell} H_{m,h}^{i_j}\right)\left(\prod\limits_{\ell< j\leq m} E_{m,h}^{i_j}\right)$$ counts the number of ways to assign $k$ monitors to $V(G)\setminus\{r,r'\}$ so that the $j^{\textrm{th}}$ subtree receives $i_j$ monitors that form a Type II set if $j\in [\ell]$ and form a Type I set otherwise. 
Now $$\binom{m}{\ell}\left(\prod\limits_{1\leq j\leq \ell} H_{m,h}^{i_j}\right)\left(\prod\limits_{\ell< j\leq m} E_{m,h}^{i_j}\right)$$ further allows for the Type II and Type I sets to be shuffled together with the restriction that the size of the Type II sets still appear in non-decreasing order relative to each other and similarly for the Type I sets. Multiplying by $\binom{\ell}{s_0, s_1, \ldots, s_k}$ and $\binom{m-\ell}{t_0, t_1, \ldots, t_k}$ removes these non-decreasing restrictions. 
Up to now, the count has been made for a fixed choice of $(i_1, i_2, \ldots, i_m)$. Thus by summing over all choices of $(i_1, i_2, \ldots, i_m)$ restricted to $i_1\leq i_2 \leq\cdots\leq i_\ell$,   $i_{\ell+1} \leq i_{\ell+2} \cdots\leq i_m$,
and  $\sum_{j\in [m]}i_j=k$ we count the number of ways to assign $k$ monitors to $V(G)\setminus\{r,r'\}$ so that exactly $\ell$ of the extended subtrees of $G$ receive Type II sets and the remaining $m-\ell$ extended subtrees receive Type I sets.
\end{proof}

\begin{observation}
For all $m\geq 2$:
$$ H_{m,0}^k=
\begin{cases}
1, & k=0\\
0, &\textrm{ otherwise}\\ 
\end{cases}
\quad\textrm{and}\quad E_{m,0}^k=
\begin{cases}
1, & k=1\\
0, &\textrm{ otherwise.}\\ 
\end{cases}
$$
\end{observation}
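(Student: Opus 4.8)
The plan is to treat this as a direct base-case computation, since $T^+_{m,0}$ is an exceptionally simple graph. First I would observe that the complete $m$-ary tree $T_{m,0}$ of height $0$ consists of a single vertex, namely its root $r$, so the extended tree $G = T^+_{m,0}$ is obtained by adjoining the stem $r'$ together with the edge $\{r,r'\}$; thus $G$ is a single edge with $V(G) = \{r, r'\}$. Since every candidate monitor set must be a subset of $V(G)\setminus\{r'\} = \{r\}$, there are exactly two sets to classify: $\emptyset$ of size $0$ and $\{r\}$ of size $1$. In particular there are no sets of size $k \geq 2$, which immediately forces $H_{m,0}^k = E_{m,0}^k = 0$ for all such $k$, independently of $m$.

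Next I would classify each of the two sets using the definition of Type I, Type II, and Type 0 sets. For $S = \{r\}$, the closed neighborhood satisfies $\pp^0(\{r\}) = N[\{r\}] = \{r,r'\} = V(G)$, so $\{r\}$ already power-dominates $G$ and is therefore Type I; this accounts for $E_{m,0}^1 = 1$ and, since $\{r\}$ is not Type II, contributes nothing to $H_{m,0}^1$. For $S = \emptyset$, we have $\pp^0(\emptyset) = N[\emptyset] = \emptyset$, so no propagation can ever occur and $\pp^{\infty}(\emptyset) = \emptyset \neq V(G)$; hence $\emptyset$ is not a power dominating set. However, $\emptyset \cup \{r'\} = \{r'\}$ satisfies $\pp^0(\{r'\}) = N[\{r'\}] = \{r', r\} = V(G)$, so $\{r'\}$ power-dominates $G$ and $\emptyset$ is Type II. This gives $H_{m,0}^0 = 1$ and, since $\emptyset$ is not Type I, contributes nothing to $E_{m,0}^0$.

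Combining these classifications yields exactly the stated piecewise values. There is essentially no hard step here: the only point requiring a small amount of care is the treatment of the empty set, where one must verify that $\emptyset$ fails to be a power dominating set (so that it is not Type I) yet becomes one after adjoining $r'$ (so that it is Type II rather than Type 0); both checks reduce to the observation that a single closed-neighborhood computation already saturates the two-vertex graph. Since none of these arguments depend on $m$, the conclusion holds uniformly for all $m \geq 2$, establishing the observation.
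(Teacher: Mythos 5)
Your proposal is correct and follows essentially the same route as the paper's own proof: both enumerate the only two candidate sets $\emptyset$ and $\{r\}$ in $V(G)\setminus\{r'\}=\{r\}$ and classify them by a direct closed-neighborhood computation. Your added remarks (explicitly handling $k\geq 2$ and verifying $\pp^{0}$ for each set) are fine elaborations of the same argument rather than a different approach.
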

\begin{proof}
Since $V(G)\setminus\{r'\}=\{r\}$ we only have two sets to consider:  $S_1=\{r\}$ and $S_2=\emptyset$. For $S_1$, we have $k=1$, and $\textsc{Obs}(G,S_1)=V(G)$, yielding the values $E_{m,0}^1=1$ and $H_{m,0}^1=0$. For $S_2$, we have $k=0$, 
$\textsc{Obs}(G,S_2)=\emptyset$, and $\textsc{Obs}(G,S_2\cup\{r'\})=V(G)$, yielding the values $H_{m,0}^0=1$ and $E_{m,0}^0=0$.
\end{proof}
\begin{lemma}\label{height1}
For all $m\geq 2$:
$$ H_{m,1}^k=
\begin{cases}
m, & k=m-1\\
0, &\textrm{ otherwise}\\ 
\end{cases}
\quad\textrm{and}\quad E_{m,1}^k=
\begin{cases}
m+1, & k=m\\
\binom{m}{k-1}, & \textrm{ otherwise.}\\ 
\end{cases}
$$
\end{lemma}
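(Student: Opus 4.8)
The plan is to carry out a direct case analysis on $G = T^{+}_{m,1}$, whose structure is especially simple: its vertex set is $\{r', r, r_1, \ldots, r_m\}$, where $r$ is adjacent to the stem $r'$ and to the $m$ leaves $r_1, \ldots, r_m$, and each $r_i$ has $r$ as its only neighbor. Since any $S$ under consideration lies in $\{r, r_1, \ldots, r_m\}$, the first fact I would record is that propagation can only ever emanate from $r$: once $\pp^0(S)$ is formed, every observed leaf and the stem $r'$ have their unique neighbor $r$ already observed, so none of them can be the forcing vertex, and no forced vertex creates a new forcer. Consequently the whole question reduces to counting, in each scenario, the number of unobserved neighbors of $r$ and checking whether it equals $1$.

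Next I would split on whether $r \in S$. If $r \in S$, then $N[r] = V(G)$, so $\pp^0(S) = V(G)$ and $S$ is automatically Type I; such sets are obtained by adjoining $r$ to any of the $\binom{m}{k-1}$ subsets of leaves of size $k-1$. If $r \notin S$, write $a = |S|$ for the number of chosen leaves; for $a \geq 1$ we have $\pp^0(S) = S \cup \{r\}$, and $r$ then has $m - a + 1$ unobserved neighbors (the $m - a$ unchosen leaves together with $r'$). Hence $r$ forces only when $a = m$, in which case it forces $r'$ and $S$ is Type I (the single full-leaf set, of size $m$); for $a < m$ no force occurs and $S$ fails to dominate $G$.

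For the Type II count I would rerun propagation with $r'$ adjoined: now $\pp^0(S \cup \{r'\}) = S \cup \{r, r'\}$, so $r$ has exactly the $m - a$ unchosen leaves as unobserved neighbors. This forces a leaf precisely when $a = m - 1$, yielding a power dominating set, whereas $a \leq m - 2$ leaves at least two unforced leaves and gives Type $0$. Combined with the fact (already established for $a < m$) that such sets are not themselves dominating, this identifies the Type II sets as exactly the $\binom{m}{m-1} = m$ sets of $m - 1$ leaves. Tallying the two cases, the full-leaf set contributes an extra $1$ to $E_{m,1}^m$ beyond the $\binom{m}{m-1}$ Type I sets through $r$, giving $E_{m,1}^m = m + 1$ and $E_{m,1}^k = \binom{m}{k-1}$ otherwise, while $H_{m,1}^{m-1} = m$ and $H_{m,1}^k = 0$ elsewhere.

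The only place where genuine care is needed is the boundary case $a = 0$: here I must confirm that the empty set is Type $0$, i.e. that $\{r'\}$ alone does not power dominate. This is exactly where the hypothesis $m \geq 2$ enters, since $\pp^0(\{r'\}) = \{r', r\}$ leaves $r$ with $m \geq 2$ unobserved leaf-neighbors and hence no available force; for $m = 1$ the claim would break. Beyond this single point the argument is routine bookkeeping, so I expect the main (minor) obstacle to be presenting the ``only $r$ can force'' reduction cleanly enough that each per-case neighbor count requires no further justification.
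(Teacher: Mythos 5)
Your proposal is correct and follows essentially the same route as the paper's proof: a direct case analysis on whether $r\in S$ and, when $r\notin S$, on the number $a$ of chosen leaves, with the cases $a\leq m-2$, $a=m-1$, and $a=m$ yielding Type 0, Type II, and Type I respectively, and the $\binom{m}{k-1}$ sets containing $r$ always Type I. Your preliminary observation that all forcing must emanate from $r$, and your explicit treatment of the boundary case $a=0$, are minor streamlinings of bookkeeping the paper handles implicitly, not a different argument.
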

\begin{proof}
Label the vertices so that the parent of $r$ is $r'$ and the children of $r$ are $\{r_1,\ldots,r_m\}$.  Then $G=(\{r',r, r_1,\ldots,r_m\},\{rr', rr_1,\ldots,rr_m\})$.   Fix $k \in \mathbb{Z}$.  Observe that if  $k\notin [0,m+1]$ then $H_{m,1}^k=E_{m,1}^k=0$ since it would be an impossible selection. As this agrees with our claim, we will proceed under the assumption that $k\in [0,m+1]$.  Let $S\subseteq V(G)\setminus \{r'\}$ of size $k$.  Assume first that $r\in S$; there are $\binom{m}{k-1}$ such sets. Since $N[S]\supseteq N[\{r\}]=V(G)$  we may conclude that $S$ is a Type I set.  If instead, we have $r\notin S$, then necessarily $k\in [0,m]$.  If  $0\leq k \leq m-2$ then at least two children of $r$ are not in $S$; call them $r_i$ and $r_j$.  Then $N[S]\subseteq N[S\cup \{r'\}]\subseteq V(G)\setminus\{r_i,r_j\}$ and therefore $r$ cannot force either vertex even if $\{r'\}$ is added to $S$. So in this case $S$ is Type 0.  If $k=m-1$ then $S=\{r_1, r_2, \ldots,r_m\}\setminus\{r_i\}$ for some $i$; there are $\binom{m}{m-1}=m$ such sets. Here, $N[S]= V(G)\setminus\{r',r_i\}$ and therefore $r$ cannot force either vertex. However, $N[S\cup\{r'\}]= V(G)\setminus\{r_i\}$ so $r\xrightarrow{G,S\cup\{r'\}}r_i$ and therefore $S$ is Type II.  Finally, if $k=m$ (and $r\notin S$) then there is only $\binom{m}{m}=1$ choice for $S$, namely $S=V(G)\setminus\{r',r\}$.  In this case $N[S]=V(G)\setminus\{r'\}$ and $r$ forces $r'$ so it is a Type I set.  The result follows. 
\end{proof}
\begin{corollary}
Let $m\geq 2$ and $G=T^+_{m,1}$.
If $S$ is a Type II set then 
$$N_G[r]\setminus\textsc{Obs}(G,S)=
\{r',r_i\} \textrm{ for some $i\in [m]$,}$$
Furthermore,
$$H_{m,1}^k=
\left\langle\mathcal{H}^{1}\oplus \mathcal{E}^{m-1}\right\rangle_{m,0}^{k}$$
and 
$$E_{m,1}^k=
\left\langle\mathcal{H}^{0}\oplus \mathcal{E}^{m}\right\rangle_{m,0}^{k}+\sum\limits_{\ell=0}^{m}\left\langle\mathcal{H}^{\ell}\oplus \mathcal{E}^{m-\ell}\right\rangle_{m,0}^{k-1}.$$
\end{corollary}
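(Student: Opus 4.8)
The plan is to deduce all three statements from the explicit classification of subsets of $V(G)\setminus\{r'\}$ already carried out in the proof of Lemma~\ref{height1}, reorganized so as to expose the recursive content of the $\langle\mathcal{H}^\ell\oplus\mathcal{E}^{m-\ell}\rangle$ notation through Lemma~\ref{biglemma}.

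For the first assertion I would start from the fact, established inside Lemma~\ref{height1}, that the Type II sets of $G=T^+_{m,1}$ are precisely the sets $S=\{r_1,\dots,r_m\}\setminus\{r_i\}$, one for each $i\in[m]$. For such an $S$ one computes $\obs(G,S)$ directly: $N[S]=V(G)\setminus\{r',r_i\}$, the vertex $r$ then has the two unobserved neighbors $r'$ and $r_i$ and so cannot force, and each observed leaf has its unique neighbor $r$ already observed; hence $\obs(G,S)=N[S]$. Since $N_G[r]=V(G)$, this gives $N_G[r]\setminus\obs(G,S)=\{r',r_i\}$, as claimed.

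For the two counting identities I would partition the size-$k$ subsets $S\subseteq V(G)\setminus\{r'\}$ by whether $r\in S$, and in the case $r\notin S$ further by the number $\ell$ of children of $r$ that are \emph{not} in $S$. Each extended subtree $G_i$ is a copy of $T^+_{m,0}$ whose only usable vertex is the leaf $r_i$, so putting a monitor on $r_i$ endows $G_i$ with a Type I set and leaving $r_i$ empty endows it with a Type II set; by Lemma~\ref{biglemma} the number of such leaf-assignments using $k$ monitors with exactly $\ell$ empty children is $\langle\mathcal{H}^\ell\oplus\mathcal{E}^{m-\ell}\rangle_{m,0}^k$. I would then match each block to a type. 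If $r\in S$ then $N[S]\supseteq N[r]=V(G)$, so $S$ is Type I, and such sets of size $k$ are in bijection with arbitrary size-$(k-1)$ leaf-assignments, counted by $\sum_{\ell=0}^m\langle\mathcal{H}^\ell\oplus\mathcal{E}^{m-\ell}\rangle_{m,0}^{k-1}$. If $r\notin S$, then $\ell=0$ (all leaves monitored) is Type I because $r$ finally forces $r'$, the case $\ell=1$ is Type II by the first assertion, and $\ell\geq 2$ is Type 0 because $r$ keeps at least two unobserved neighbors even after $r'$ is adjoined. Collecting the single Type II block yields $H_{m,1}^k=\langle\mathcal{H}^1\oplus\mathcal{E}^{m-1}\rangle_{m,0}^k$, and collecting the two Type I blocks yields the stated formula for $E_{m,1}^k$.

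The load-bearing but routine step is the identification furnished by Lemma~\ref{biglemma}: one must verify that at height $0$ the quantity $\langle\mathcal{H}^\ell\oplus\mathcal{E}^{m-\ell}\rangle_{m,0}^k$ genuinely reduces to the leaf-count above, equivalently that it equals $\binom{m}{k}$ exactly when $\ell=m-k$ and vanishes otherwise, using the base values $H_{m,0}^0=E_{m,0}^1=1$. I expect the only real obstacle to be bookkeeping: keeping the size index aligned under the passage $r\notin S\mapsto r\in S$ (the shift from $k$ to $k-1$) and confirming that the case $r\notin S,\ \ell=1$ is the unique source of Type II sets, so that no block is omitted or double-counted. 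Substituting the resulting closed forms back into Lemma~\ref{height1}, namely $H_{m,1}^{m-1}=m$ together with $E_{m,1}^m=m+1$ and $E_{m,1}^k=\binom{m}{k-1}$ otherwise, then serves as a consistency check.
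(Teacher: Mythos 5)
Your proposal is correct and takes essentially the same route as the paper: both arguments rest on the classification of Type II sets established in the proof of Lemma~\ref{height1} (giving $N_G[r]\setminus\textsc{Obs}(G,S)=\{r',r_i\}$ directly) together with the Lemma~\ref{biglemma} interpretation of the brackets at height $0$, under which $\left\langle\mathcal{H}^{\ell}\oplus \mathcal{E}^{m-\ell}\right\rangle_{m,0}^{X}$ vanishes unless $X=m-\ell$, where it equals $\binom{m}{m-\ell}$. The only cosmetic difference is that you match each type block (Type I with $r\in S$, Type I with $r\notin S$ and all leaves monitored, Type II with exactly one empty leaf, Type 0 otherwise) to its bracket directly, whereas the paper evaluates the brackets in closed form and compares against the values $H_{m,1}^k$ and $E_{m,1}^k$ from Lemma~\ref{height1}; the mathematical content is identical.
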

\begin{proof}
The only case where $S$ is Type II is when $S=\{r_1, r_2, \ldots, r_m\}\setminus\{r_i\}$ for some $i\in [m]$.  It follows from the proof that if $S$ is a Type II set then 
$$N_G[r]\setminus\textsc{Obs}(G,S)=
\{r',r_i\} \textrm{ for some $i\in [m]$,}$$
Observe that $V(G)\setminus\{r,r'\}=\{r_1, r_2, \ldots, r_m\}$. Now $\left\langle\mathcal{H}^{\ell}\oplus \mathcal{E}^{m-\ell}\right\rangle_{m,0}^{X}$ counts the number of ways to assign $X$ monitors  to $V(G)\setminus\{r,r'\}$ so that exactly $\ell$ subtrees receive a Type II set and the remaining $m-\ell$ receive Type I sets.  In this context $\left\langle\mathcal{H}^{\ell}\oplus \mathcal{E}^{m-\ell}\right\rangle_{m,0}^{X}$ counts the number of ways to place monitors on exactly $\ell$ of the children of $r$ and therefore $\left\langle\mathcal{H}^{\ell}\oplus \mathcal{E}^{m-\ell}\right\rangle_{m,0}^{X}=0$ unless $X=m-\ell$ in which case $\left\langle\mathcal{H}^{\ell}\oplus \mathcal{E}^{m-\ell}\right\rangle_{m,0}^{X}=\binom{m}{m-\ell}$.

It follows that $$ \left\langle\mathcal{H}^{1}\oplus \mathcal{E}^{m-1}\right\rangle_{m,0}^{k}=
\begin{cases}
\binom{m}{m-1}, & k=m-1\\
0, &\textrm{ otherwise}\\ 
\end{cases}$$
$$\left\langle\mathcal{H}^{0}\oplus \mathcal{E}^{m}\right\rangle_{m,0}^{k}=
\begin{cases}
\binom{m}{m}, & k=m\\
0, & \textrm{otherwise}\\
\end{cases}
$$
$$\sum\limits_{\ell=0}^{m}\left\langle\mathcal{H}^{\ell}\oplus \mathcal{E}^{m-\ell}\right\rangle_{m,0}^{k-1}=\left\langle\mathcal{H}^{\ell}\oplus \mathcal{E}^{k-1}\right\rangle_{m,0}^{k-1}=
\binom{m}{k-1}.
$$
\end{proof}

\begin{lemma}\label{height2}
For all $m\geq 2$,
$$ H_{m,2}^k=\left\langle\mathcal{H}^{m}\oplus \mathcal{E}^{0}\right\rangle_{m,1}^{k}
\quad\textrm{and}\quad E_{m,2}^k=\sum\limits_{\ell=0}^{m-1}\left\langle\mathcal{H}^{\ell}\oplus \mathcal{E}^{m-\ell}\right\rangle_{m,1}^{k}+\sum\limits_{\ell=0}^{m}\left\langle\mathcal{H}^{\ell}\oplus \mathcal{E}^{m-\ell}\right\rangle_{m,1}^{k-1}
.$$
\end{lemma}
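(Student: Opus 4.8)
The plan is to classify every $S\subseteq V(G)\setminus\{r'\}$ according to (i) whether $r\in S$ and (ii) the types of the induced sets $S_i:=S\cap\bigl(V(G_i)\setminus\{r\}\bigr)$ on the $m$ extended subtrees $G_1,\dots,G_m$, each of which is a copy of $T^{+}_{m,1}$. Since $V(G)\setminus\{r,r'\}$ is the disjoint union of the sets $V(G_i)\setminus\{r\}$, the family $(S_i)_{i\in[m]}$ records all monitors other than a possible one at $r$. By Proposition~\ref{ZerosPersist}, if $S$ is Type I or Type II then each $S_i\cup\{r\}$ power-dominates $G_i$, i.e.\ each $S_i$ is Type I or Type II for $G_i$; contrapositively, if some $S_i$ is Type 0 then $S$ is Type 0 and contributes to neither count. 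The engine of the argument is the monotonicity principle that, because every vertex of $G_i$ other than $r$ has the same neighborhood in $G$ as in $G_i$, every forcing move available inside $G_i$ remains available in $G$; hence $\obs(G,S)\cap V(G_i)\supseteq\obs(G_i,S_i)$, and likewise $\obs(G,S)\cap V(G_i)\supseteq\obs(G_i,S_i\cup\{r\})$ once $r$ has been observed in $G$.

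First I would dispose of the case $r\in S$. Here $N[r]=\{r',r,r_1,\dots,r_m\}$ is observed at once, so $r'$ and every $r_i$ are observed; moreover $r$ can never force (all its neighbors are observed) and $r'$ can never force, so no propagation crosses between subtrees and the $G_i$ evolve independently with $r$ treated as an initially-observed stem. Thus $S$ power-dominates $G$ exactly when each $S_i\cup\{r\}$ power-dominates $G_i$, i.e.\ when every $S_i$ is Type I or Type II; in particular $S$ is never Type II when $r\in S$. Distributing the remaining $k-1$ monitors over the subtrees and summing over the number $\ell\in\{0,\dots,m\}$ of subtrees that receive Type II sets, Lemma~\ref{biglemma} counts these sets as $\sum_{\ell=0}^{m}\left\langle\mathcal{H}^{\ell}\oplus\mathcal{E}^{m-\ell}\right\rangle_{m,1}^{k-1}$, the second sum in the formula for $E_{m,2}^k$.

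Next I would treat $r\notin S$, writing $\ell$ for the number of subtrees whose set $S_i$ is Type II (the rest Type I, since any Type 0 subtree already renders $S$ Type 0). If $\ell\le m-1$ then at least one $G_i$ is Type I and observes all of $V(G_i)$—in particular $r$—on its own; once $r$ is observed, the corollary to Lemma~\ref{height1} (a Type II set of $T^{+}_{m,1}$ leaves exactly the stem and one child of the root unobserved, with the root itself observed) guarantees that each Type II subtree completes, since its root now has a single unobserved neighbor and forces it. All of $V(G)$ is then observed (with $r\to r'$ firing last), so $S$ is Type I; these sets are counted by $\sum_{\ell=0}^{m-1}\left\langle\mathcal{H}^{\ell}\oplus\mathcal{E}^{m-\ell}\right\rangle_{m,1}^{k}$, the first sum for $E_{m,2}^k$. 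If instead $\ell=m$, no subtree observes $r$, and by the corollary each root $r_i$ is observed but has two unobserved neighbors ($r$ and one child), so $r_i$ cannot force, while $r$ itself is unobserved and cannot force either. The propagation therefore deadlocks with $\obs(G,S)=\bigcup_i\obs(G_i,S_i)\ne V(G)$, so $S$ is not Type I; adjoining $r'$ observes $r$, which unlocks every subtree exactly as above, so $S\cup\{r'\}$ power-dominates and $S$ is Type II. These sets are counted by $\left\langle\mathcal{H}^{m}\oplus\mathcal{E}^{0}\right\rangle_{m,1}^{k}$, matching $H_{m,2}^k$.

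The main obstacle is making the propagation arguments airtight rather than intuitive: I must justify that local observation transfers to the global graph (the monotonicity principle), that the only way to observe $r$ from within a subtree is via a Type I subtree, and that the $\ell=m$ configuration genuinely deadlocks before $r'$ is used. Each of these rests on the explicit structural description of Type II sets at height $1$ supplied by the corollary to Lemma~\ref{height1}—namely that a Type II subtree has its root observed but its stem and exactly one child unobserved—so the bulk of the work is organizing these case analyses cleanly and checking that the three surviving configurations ($r\in S$; $r\notin S$ with $\ell\le m-1$; $r\notin S$ with $\ell=m$) partition all Type I and Type II sets without overlap.
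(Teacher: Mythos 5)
Your proposal is correct and takes essentially the same route as the paper's own proof: the identical three-case split ($r\in S$, counted by $\sum_{\ell=0}^{m}\left\langle\mathcal{H}^{\ell}\oplus\mathcal{E}^{m-\ell}\right\rangle_{m,1}^{k-1}$; $r\notin S$ with $\ell\leq m-1$ Type II subtrees, giving Type I sets; and $r\notin S$ with $\ell=m$, giving the Type II count), using Proposition~\ref{ZerosPersist} to discard Type 0 subtrees, Lemma~\ref{biglemma} for the enumeration, and the explicit height-1 structure of Type II sets from Lemma~\ref{height1} to drive the forcing analysis. Your only deviation is cosmetic: you state the cross-boundary monotonicity of forcing explicitly (and correctly note it holds because vertices other than $r$ keep their neighborhoods), a point the paper leaves implicit.
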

\begin{proof}
Let $G=T^+_{m,2}$ with root $r$ and stem $r'$.  Label the children of $r$ by $r_1, \ldots, r_m$ and for $i\in [m]$ label the children of $r_i$ by $q_{i_1},\ldots, q_{i_m}$.  The $i^{\textrm{th}}$ extended subtree of $G$ is $G_i$ with vertex set $\{r,r_i,q_{i_1},\ldots, q_{i_m} \}$ (see figure below). Let $S\subseteq V(G)\setminus\{r'\}$ and for each $i\in [m]$ let $S_i=(S\cap V(G_i))\setminus\{r\}$.  By Proposition \ref{ZerosPersist} if $S_i$ is Type 0 for any $i$ then $S$ does not contribute to $H^{k}_{m.2}$ or $E^{k}_{m,2}$, thus we only consider the cases where all the $S_i$'s are Type I or Type II.

\begin{tikzpicture}[auto=center,every node/.style={circle,fill=white}]
        \node[draw] (r') at (7,7) {$r'$};
        \node[draw,fill=blue!20] (r) at (7,6) {$r$};
        \node[draw] (r1) at (1,4) {$r_1$};
        \node[draw,fill=blue!20] (ri) at (6,4) {$r_i$};
        \node (r3) at (5.5,5) {};
        \node (r4) at (6,5) {};
        \node (r5) at (7.5,5) {};
        \node (r6) at (8,5) {};
        \node (r7) at (8.5,5) {};
        \node[draw] (rm) at (13,4) {$r_m$};
        \node[draw] (q11) at (-1,2) {$q_{1_1}$};
        \node[draw] (q12) at (0,2) {$q_{1_2}$};
        \node (q13) at (1,2) {$\cdots$};
        \node[draw] (q1m) at (2,2) {$q_{1_m}$};
        \node[draw,fill=blue!20] (qi1) at (4,2) {$q_{i_1}$};
        \node[draw, fill=blue!20] (qi2) at (5,2) {$q_{i_2}$};
        \node (qi3) at (6,2) {$\cdots$};
        \node[draw, fill=blue!20] (qim) at (7,2) {$q_{i_m}$};
        \node[draw] (qm1) at (11,2) {$q_{m_1}$};
        \node[draw] (qm2) at (12,2) {$q_{m_2}$};
        \node (qm3) at (13,2) {$\cdots$};
        \node[draw] (qmm) at (14,2) {$q_{m_m}$};
        \draw (r') -- (r);
        \draw (r) -- (r1);
        \draw (r) -- (ri);
        \draw (r) -- (r3);
        \draw (r) -- (r4);
        \draw (r) -- (r5);
        \draw (r) -- (r6);
        \draw (r) -- (r7);
        \draw (r) -- (rm);
        \draw (r1) -- (q11);
        \draw (r1) -- (q12);
        \draw (r1) -- (q1m);
        \draw (ri) -- (qi1);
        \draw (ri) -- (qi2);
        \draw (ri) -- (qim);
        \draw (rm) -- (qm1);
        \draw (rm) -- (qm2);
        \draw (rm) -- (qmm);
    \end{tikzpicture}
    
Case 1: Suppose $r\notin S$ and  $S_i$ is Type II for $G_i$ for each $i\in [m]$.  By Lemma \ref{biglemma}, with $\ell=m$ and $h=1$, there are $\left\langle\mathcal{H}^{m}\oplus \mathcal{E}^{0}\right\rangle_{m,1}^{k}$ ways to choose $S$. It follows from the proof of Lemma \ref{height1} that for each $i\in [m]$, $S_i=\{q_{i_1}, q_{i_2}, \ldots,q_{i_m}\}\setminus\{q_{i_{j_i}}\}$ for some $j_i\in [m]$. Therefore $$S=\bigcup_{i\in [m]}\left(\{q_{i_1}, q_{i_2}, \ldots,q_{i_m}\}\setminus\{q_{i_{j_i}}\}\right)$$ which in turn implies that $r\notin \textsc{Obs}(G,S)$ since for any $i\in[m]$, $r_i$ cannot force both $q_{i_j}$ and $r$. On the other hand, since $\{r,r'\} \subseteq N_G[r]=\{r',r, r_1,\ldots,r_m\}$ we have that $r_i\xrightarrow{G,S\cup\{r\}}q_{i_{j_i}}$ for each $i\in [m]$ so that $S\cup\{r\}$ is a dominating set for $G$. We conclude that whenever $r\not\in S$ and $S_i$ is Type II for $S_i$ for each $i\in[m]$, then $S$ is Type II. Moreover, there are $\langle\mathcal{H}^{m}\oplus\mathcal{E}^{0}\rangle_{m,1}^k$ such sets, $S$. 

Case 2: Suppose $r\notin S$ and for some $\ell\in[0,m-1]$, we have $S_i$ is Type II for the corresponding $G_i$ for $\ell$ of the sets $G_i$ and the remaining $m-\ell$ sets $S_i$ are Type I for their corresponding $G_i$. Observe that by Lemma \ref{biglemma} there are $\sum_{\ell=0}^{m-1}\left\langle\mathcal{H}^{\ell}\oplus \mathcal{E}^{m-\ell}\right\rangle_{m,1}^{k}$ such ways to choose $S$. The restriction of $S$ to the the extended subtrees of $G$ results in $\ell$ Type II sets and $m-\ell>0$ Type I sets. Without loss of generality, we may assume $S_i$ is Type II for $G_i$ for all $i\in[\ell]$ and $S_j$ is Type I for $G_j$ for all $j\in[\ell+1,m]$. Appealing to the proof of Lemma \ref{height1} (as we did in Case 1) we may conclude that 
\begin{align*}
    \textsc{Obs}(G,S) &\supseteq \bigcup_{i\in [m]}\textsc{Obs}(G,S_i)\\
    &=\left(\bigcup_{i\in [\ell]}\textsc{Obs}(G,S_i)\right)\cup\left(\bigcup_{i\in [\ell+1,m]}\textsc{Obs}(G,S_i)\right)\\
    &=\left(\bigcup_{i\in [\ell]}\{r_i,q_{i_1}, q_{i_2}, \ldots,q_{i_m}\}\setminus\{q_{i_{j_i}}\}\right)\cup\left(\bigcup_{i\in [\ell+1,m]}V(G_i)\right).
\end{align*}
It then follows that $r_m\xrightarrow{G,S}r\xrightarrow{G,S}r'$ and then for $i\in [\ell]$, $r_i\xrightarrow{G,S}q_{i_{j_i}}$.  Thus, these $\sum_{\ell=0}^{m-1}\langle\mathcal{H}^\ell\oplus\mathcal{E}^{m-\ell} \rangle_{m,1}^k$ sets are all Type I. 

Case 3: Suppose $r\in S$ and for some $\ell\in[0,m]$, we have $S_i$ is Type II for $G_i$ for $\ell$ of the sets $G_i$ and the remaining $m-\ell$ sets $S_i$ are Type I for their corresponding $G_i$. Without loss of generality, we may assume $S_i$ is Type II for $G_i$ for all $i\in[\ell]$ and $S_j$ is Type $I$ for $G_j$ for all $j\in[\ell+1,m]$. By Lemma \ref{biglemma}, and noting that there are $k-1$ monitors placed on vertices other than $r$, there are $\sum_{\ell=0}^{m}\left\langle\mathcal{H}^{\ell}\oplus \mathcal{E}^{m-\ell}\right\rangle_{m,1}^{k-1}$ ways to choose $S$.  Again by appealing to the proof of Lemma \ref{height1} we have that 
\begin{align*}
    \textsc{Obs}(G,S) &\supseteq N_G[r]\cup\left(\bigcup_{i\in [m]}\textsc{Obs}(G,S_i)\right)\\
    &\supseteq  N_G[r]\cup\left(\bigcup_{i\in [m]}\{q_{i_1}, q_{i_2}, \ldots,q_{i_m}\}\setminus\{q_{i_{j_i}}\}\right)\\
    &=V(G)\setminus\left(\bigcup_{i\in[m]}\{q_{i_{j_i}}\}\right).
\end{align*} It then follows that $r_i\xrightarrow{G,S}q_{i_{j_i}}$ for each $i\in [m]$.   We conclude that these $\sum_{\ell=0}^{m}\left\langle\mathcal{H}^{\ell}\oplus \mathcal{E}^{m-\ell}\right\rangle_{m,1}^{k-1}$ sets are all Type I.

\end{proof}
\begin{corollary}
Let $m\geq 2$ and $G=T^+_{m,2}$.
If $S$ is a Type II set then 
$$N_G[r]\setminus\textsc{Obs}(G,S)=
\{r',r\}.$$
Furthermore,
$$H_{m,2}^k=
\left\langle\mathcal{H}^{m}\oplus \mathcal{E}^{0}\right\rangle_{m,1}^{k}
$$
and 
$$E_{m,2}^k=
\sum\limits_{\ell=0}^{m-1}\left\langle\mathcal{H}^{\ell}\oplus \mathcal{E}^{m-\ell}\right\rangle_{m,1}^{k}+\sum\limits_{\ell=0}^{m}\left\langle\mathcal{H}^{\ell}\oplus \mathcal{E}^{m-\ell}\right\rangle_{m,1}^{k-1}.$$
\end{corollary}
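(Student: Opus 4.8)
The plan is to notice that the ``furthermore'' portion of the statement is word-for-word the conclusion of Lemma~\ref{height2}, so it requires no new argument; the only genuinely new content is the assertion that $N_G[r]\setminus\textsc{Obs}(G,S)=\{r',r\}$ whenever $S$ is Type II. I would therefore start from the case analysis already carried out in the proof of Lemma~\ref{height2}. There, Proposition~\ref{ZerosPersist} disposes of every set some of whose restrictions $S_i$ are Type 0 (these are Type 0 for $G$), Cases~2 and~3 are shown to yield Type I sets, and only Case~1 --- where $r\notin S$ and $S_i$ is Type II for \emph{every} extended subtree $G_i$ --- produces a Type II set. Thus it suffices to verify the observation identity for exactly the sets described in Case~1.

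For such an $S$, I would use the corollary following Lemma~\ref{height1} applied to each $G_i$, which is a copy of $T^+_{m,1}$ with root $r_i$ and stem $r$. Since $S_i$ is Type II for $G_i$, that corollary gives $N_{G_i}[r_i]\setminus\textsc{Obs}(G_i,S_i)=\{r,q_{i_{j_i}}\}$ for some $j_i\in[m]$. In particular each $r_i$ is observed, and because the forcing that observes $r_i$ takes place entirely among the grandchildren and $r_i$ (and $S_i\subseteq S$), each $r_i$ is also observed in $G$. Hence all of $r_1,\dots,r_m$ lie in $\textsc{Obs}(G,S)$.

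The heart of the argument is to show that $r$ itself never becomes observed, which is what keeps the subtrees from interacting. Since $r\notin S$ and none of $r$'s children lie in $S$ (each $S_i$ consists only of grandchildren of $r$), we have $r\notin N[S]$, so $r$ could only enter $\textsc{Obs}(G,S)$ by being forced. The only observed neighbors of $r$ are the $r_i$, and by the height-one corollary each $r_i$ has \emph{two} unobserved neighbors, namely $r$ and $q_{i_{j_i}}$; therefore no $r_i$ can force $r$. It follows that $r\notin\textsc{Obs}(G,S)$, and since $r'$ has $r$ as its only neighbor, $r'$ is unobserved as well. As the $r_i$ are observed while $r$ and $r'$ are not, and $N_G[r]=\{r',r,r_1,\dots,r_m\}$, I conclude $N_G[r]\setminus\textsc{Obs}(G,S)=\{r',r\}$.

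I expect the main obstacle to be the step that $r$ cannot be forced: the key is to invoke the precise description of the two unobserved vertices of each subtree rather than merely the Type II property, since it is exactly the second unobserved neighbor $q_{i_{j_i}}$ that blocks the potential force $r_i\to r$. Once $r$ is known to be unobserved the subtrees are sealed off from one another, and the two displayed identities for $H_{m,2}^k$ and $E_{m,2}^k$ are simply carried over from Lemma~\ref{height2}.
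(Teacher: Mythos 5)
Your proposal is correct and matches the paper's (implicit) argument: the paper states this corollary without a separate proof precisely because the counting identities restate Lemma~\ref{height2} and because Case~1 of that lemma's proof already establishes that the Type II sets are exactly those with $r\notin S$ and every $S_i$ Type II, with $r$ (hence $r'$) unobserved while each $r_i$ is observed. Your stall argument---that each $r_i$ retains the two unobserved neighbors $r$ and $q_{i_{j_i}}$, so no force into $r$ can ever occur---is the same mechanism the paper invokes when it notes that $r_i$ cannot force both $q_{i_{j_i}}$ and $r$.
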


In the proof of Lemma \ref{height2}, we twice claimed that \textit{without loss of generality} we may assume that if $\ell$ of the subtrees of $T^+$ are Type II then we may assume they are the first $\ell$ trees. We will use this without claim in the rest paper; in particular in the proofs of Lemma \ref{hugelemma} and Theorem \ref{bigtheorem}.

\begin{lemma}\label{hugelemma}
Let $m\geq 2$, $h\geq 1$, and $G=T^+_{m,h}$.
If $S$ is a Type II set then 
$$N_G[r]\setminus\textsc{Obs}(G,S)=
\begin{cases}
\{r',r_i\} \textrm{ for some $i\in [m]$,} &\textrm{if }h\textrm{ is odd}\\
\{r',r\}, & \textrm{if }h\textrm{ is even}\\
    \end{cases}.$$
Furthermore,
$$H_{m,h}^k=
\begin{cases}
\left\langle\mathcal{H}^{1}\oplus \mathcal{E}^{m-1}\right\rangle_{m,h-1}^{k}, & \textrm{if }h\textrm{ is odd}\\ 
\left\langle\mathcal{H}^{m}\oplus \mathcal{E}^{0}\right\rangle_{m,h-1}^{k}, &\textrm{if }h\textrm{ is even}\\
\end{cases}
$$
and 
$$E_{m,h}^k=
\begin{cases}
\left\langle\mathcal{H}^{0}\oplus \mathcal{E}^{m}\right\rangle_{m,h-1}^{k}+\sum\limits_{\ell=0}^{m}\left\langle\mathcal{H}^{\ell}\oplus \mathcal{E}^{m-\ell}\right\rangle_{m,h-1}^{k-1}, &\textrm{if } h\textrm{ is odd}\\
\sum\limits_{\ell=0}^{m-1}\left\langle\mathcal{H}^{\ell}\oplus \mathcal{E}^{m-\ell}\right\rangle_{m,h-1}^{k}+\sum\limits_{\ell=0}^{m}\left\langle\mathcal{H}^{\ell}\oplus \mathcal{E}^{m-\ell}\right\rangle_{m,h-1}^{k-1}, &\textrm{if } h\textrm{ is even}\\ 
\end{cases}.$$
\end{lemma}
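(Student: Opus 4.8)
\section*{Proof proposal}

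The plan is to argue by strong induction on $h$, using Lemmas \ref{height1} and \ref{height2} together with their corollaries as the base cases $h=1$ (odd) and $h=2$ (even). For the inductive step fix $G=T^+_{m,h}$ with $h\geq 3$, write its extended subtrees as $G_1,\dots,G_m$ (each an extended $m$-ary tree of height $h-1$ with root $r_i$ and stem $r$), and for $S\subseteq V(G)\setminus\{r'\}$ set $S_i=(S\cap V(G_i))\setminus\{r\}$. By Proposition \ref{ZerosPersist} (contrapositive), if any $S_i$ is Type 0 then $S$ is Type 0 and contributes to neither $H^k_{m,h}$ nor $E^k_{m,h}$, so throughout I assume every $S_i$ is Type I or Type II and let $\ell$ be the number of Type II subtrees. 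Lemma \ref{biglemma} then identifies $\left\langle\mathcal{H}^{\ell}\oplus\mathcal{E}^{m-\ell}\right\rangle_{m,h-1}^{X}$ as the number of such $S$ of the relevant size, so the entire argument reduces to deciding, for each configuration of $r\in S$ or $r\notin S$ and each $\ell$, whether $S$ is Type I, Type II, or Type 0, and then matching counts.

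First I would dispatch the case $r\in S$ uniformly, \emph{without} the inductive hypothesis. If $r\in S$ then $\{r_1,\dots,r_m,r'\}\subseteq N[S]=\mathcal{P}^0(S)$, so every root $r_i$ and the stem $r'$ are observed at the outset. Because each $S_i$ is Type I or Type II, $S_i\cup\{r\}$ power-dominates $G_i$; since $r$ and $r_i$ are already observed, the only force this process could ever demand across the edge $\{r,r_i\}$ is $r\to r_i$, which is moot, so the process replays verbatim inside $G$ and observes all of $V(G_i)$. Hence $S$ is Type I. This both produces the common summand $\sum_{\ell=0}^{m}\left\langle\mathcal{H}^{\ell}\oplus\mathcal{E}^{m-\ell}\right\rangle_{m,h-1}^{k-1}$ of $E^k_{m,h}$ in either parity and shows, contrapositively, that every Type II set satisfies $r\notin S$; in particular the root of any Type II subtree is unselected, a fact I reuse below.

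The heart of the proof is the case $r\notin S$, where the parity enters through the inductive characterization of Type II subtrees. If $h$ is even then each $G_i$ has odd height, so a Type II $S_i$ leaves $N_{G_i}[r_i]\setminus\obs(G_i,S_i)=\{r,c_i\}$ for a single child $c_i$ of $r_i$; thus $r_i$ is already observed and only the gateway edge and one pendant branch remain. If $h$ is odd then each $G_i$ has even height and a Type II $S_i$ leaves $\{r,r_i\}$, so $r_i$ itself is unobserved and hangs an unobserved neighbor on $r$. In both parities a Type I subtree forces $r$ through the chain terminating in $r_i\to r$, legal in $G$ since $r_i$'s neighborhood is unchanged. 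I would then check: for $h$ even, one Type I subtree suffices to observe $r$, after which every Type II subtree completes via $r_i\to c_i$ and finally $r\to r'$, so $\ell\leq m-1$ gives Type I and $\ell=m$ (all Type II, hence no $r_i\in S$, hence $r$ unobserved) stays stuck until $r'$ is supplied, giving Type II with $N_G[r]\setminus\obs(G,S)=\{r',r\}$; for $h$ odd, $r$ acquires one unobserved neighbor $r_i$ per Type II subtree, so $\ell=0$ is Type I, $\ell=1$ is Type II with $N_G[r]\setminus\obs(G,S)=\{r',r_i\}$ (adding $r'$ frees $r\to r_i$), and $\ell\geq 2$ leaves $r$ with at least two unobserved neighbors that no forcing, even after adding $r'$, can resolve, giving Type 0. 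Matching these verdicts against Lemma \ref{biglemma} yields exactly the claimed formulas for $H^k_{m,h}$ and $E^k_{m,h}$.

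The step I expect to be the main obstacle is the locality bookkeeping that licenses all of the above: that a subtree $G_i$ communicates with the rest of $G$ \emph{only} through the single edge $\{r,r_i\}$, so that $\obs(G,S)\cap(V(G_i)\setminus\{r\})$ equals the stand-alone $\obs(G_i,S_i)\setminus\{r\}$ \emph{unless} a force crosses that edge, namely $r\to r_i$ in the even case or the unlocking of $r_i\to c_i$ once $r$ is observed in the odd case. Establishing this requires the same reasoning as in Proposition \ref{ZerosPersist} that a child cannot force its parent out of turn: because each $S_i$ sits at its stand-alone fixed point, no child of an unobserved $r_i$ (even-height subtrees, $h$ odd) nor of an unobserved $c_i$ (odd-height subtrees, $h$ even) can force it from below, so these vertices stay unobserved until the gateway fires. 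The companion subtlety is a permanence argument at $r$: once $r$ has two or more unobserved neighbors, neither further internal forcing nor the addition of $r'$ (which only observes $r'$ and possibly $r$) can reduce that count below two, which is precisely what makes $\ell\geq 2$ genuinely Type 0 when $h$ is odd.
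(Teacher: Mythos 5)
Your proposal is correct and takes essentially the same route as the paper's proof: strong induction on $h$ with the height-$1$ and height-$2$ lemmas and their corollaries as base cases, Proposition \ref{ZerosPersist} to discard Type 0 subtrees, Lemma \ref{biglemma} for the counts, and the identical parity-split case analysis on $r\in S$ versus $r\notin S$ and the number $\ell$ of Type II subtrees, arriving at the same verdicts (odd: $\ell=1$ Type II, $\ell\geq 2$ Type 0; even: $\ell=m$ Type II) and the same characterization of $N_G[r]\setminus\textsc{Obs}(G,S)$. Your only departures are organizational---dispatching $r\in S$ once for both parities (the paper repeats it in each case) and making explicit the locality bookkeeping that forces enter $G_i$ only through the edge $\{r,r_i\}$, which the paper invokes implicitly via arguments of the form ``$r_i\xrightarrow{G,S}r$ implies $r_i\xrightarrow{G_i,S_i}r$.''
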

\begin{proof}
The statement is true when $h\in \{1,2\}$ by the corollaries to Lemma \ref{height1} and \ref{height2}.
We proceed by strong induction on $h$ by assuming that the claim is true for $h=1,2,\ldots,n$.  Let $G=T^+_{m,h}$ and let $S\subseteq V(G)\setminus\{r'\}$ be a Type II set.

Case 1: Suppose $h=n+1$ is odd.   By Proposition \ref{ZerosPersist}, if for any $i\in[m]$ it is the case that $S_i$ is Type 0 for $G_i$, then $S$ is Type 0 for $G$.  Since $S$ is presumed to be a Type II set, we may proceed under the assumption $S_i$ is Type II for the first $\ell$ extended subtrees and Type I for the latter $m-\ell$ extended subtrees. By inductive hypothesis $N_G[r_i]\setminus \textsc{Obs}(G_i,S_i)=\{r,r_i\}$ for each $i\in [\ell]$.  

Subcase 1: Let $\ell=0$ and suppose $r\notin S$.  Since $S_i$ is Type I for each $i\in [m]$ we have 
\[\textsc{Obs}(G,S)\supseteq\bigcup_{i\in [m]}\textsc{Obs}(G_i,S_i)=\bigcup_{i\in [m]}V(G_i)=V(G)\setminus\{r'\}.\]
However $r\xrightarrow{G,S}r'$ so the $\left\langle\mathcal{H}^{0}\oplus \mathcal{E}^{m}\right\rangle_{m,h-1}^{k}$ sets in question are Type I. This gives the first summand for $E^k_{m,h}$ in the case that $h$ is odd. 

 Subcase 2: Let $\ell=1$ and suppose $r\notin S$.  By inductive hypothesis $r,r_1\notin \textsc{Obs}(G_1,S_1)$ and therefore $r',r_1\notin \bigcup_{i\in [m]}\textsc{Obs}(G_i,S_i)$. Since $r$ cannot force both $r'$ and $r_1$, it follows that $S$ is not Type I.  Observe that $N_G[r]\setminus\obs(G,S)=\{r',r_1\}$ which agrees with the first claim in the case where $h$ is odd. We show that the sets $S$ in this case are Type II. Assume, by way of contradiction, that there is some vertex $v\in V(G)$ with $v\notin\textsc{Obs}(G,S\cup\{r'\})$. Since $S_2, \ldots, S_m$ are Type I then $v\in V(G)\setminus\bigcup_{i\in [2,m]}V(G_i)$.  Clearly $r'\in \textsc{Obs}(G,S\cup\{r'\})$ so $v\in V(G_1)$.  Now $r'\xrightarrow{G,S\cup\{r'\}} r\xrightarrow{G,S\cup\{r'\}}r_1$ so $v\in V(G_1)\setminus\{r,r_1\}$.  However, $S_1$ is a Type II set so $r\xrightarrow{G_1,S_1}r_1\xrightarrow{G_1,S_1}\cdots \xrightarrow{G_1,S_1}v.$ But then $r\xrightarrow{G,S\cup\{r'\}}r_1\xrightarrow{G,S\cup\{r'\}}\cdots \xrightarrow{G,S\cup\{r'\}}v$ as well.  It follows that the $\left\langle\mathcal{H}^{1}\oplus \mathcal{E}^{m-1}\right\rangle_{m,h-1}^{k}$ sets in question are Type II. This gives the result for $H^k_{m,h}$ when $h$ is odd. 

Subcase 3: Let $2\leq\ell\leq m$ and suppose $r\notin S$. By inductive hypothesis $r_i\notin \textrm{Obs}(G_i,S_i)$ for all $i\in [\ell]$ and, since $\ell\geq 2$, $r$ cannot force all of the vertices $r_i$ with $i\in[\ell]$.   Thus $\textsc{Obs}(G,S)\subseteq V(G)\setminus\{r',r, r_1, \ldots,r_\ell\}$ so $S$ is not Type I.  Assume, by way of contradiction, that $S$ is Type II.  Then for each $v\in V(G)$ there is a chain $$r'\xrightarrow{G,S\cup\{r'\}}r\xrightarrow{G,S\cup\{r'\}}r_i\xrightarrow{G,S\cup\{r'\}}\cdots \xrightarrow{G,S\cup\{r'\}}v.$$ However, this is impossible since $r$ can only force a child once all of the other children have been observed. The sets in question are Type 0.

Subcase 4: Let $0\leq \ell\leq m$ and suppose $r\in S$. Note that \[\textsc{Obs}(G,S)\supseteq N_G[r]\cup\bigcup_{i\in [m]}\textsc{Obs}(G_i,S_i\cup \{r\})=N_G[r]\cup\bigcup_{i\in [m]}V(G_i)=V(G).\]  It follows that the $\sum\limits_{\ell=0}^{m}\left\langle\mathcal{H}^{\ell}\oplus \mathcal{E}^{m-\ell}\right\rangle_{m,h-1}^{k-1}$ sets in question are Type I. This gives the second summand for $E^k_{m,h}$ in the case where $h$ is odd.

Case 2: Suppose $h=n+1$ is even.   By Proposition \ref{ZerosPersist}, if for any $i\in[m]$ it is the case that $S_i$ is Type 0 (for $G_i$), then $S$ is Type 0 for $G$.  Assuming this is not the case, we may proceed under the assumption $S_i$ is Type II for the first $\ell$ extended subtrees and Type I for the latter $m-\ell$ extended subtrees.  By inductive hypothesis, for each $i\in[\ell]$, there is some $j_i\in[m]$ such that $N_G[r_i]\setminus \textsc{Obs}(G_i,S_i)=\{r,q_{i_{j_i}}\}$.  Without loss of generality we will assume that $N_G[r_i]\setminus \textsc{Obs}(G_i,S_i)=\{r,q_{i_{1}}\}$ for each $i\in [\ell]$. 

Subcase 1: Let $\ell=m$ and suppose $r\notin S$.  Since $S_i$ is Type II for each $i\in [m]$ we have $r, q_{i_{1}}\notin\textsc{Obs}(G_i,S_i)$.  Observe that this implies that $r \notin\textsc{Obs}(G,S)$ since otherwise $r_i\xrightarrow{G,S}r$ implies $r_i\xrightarrow{G_i,S_i}r$ for some $i\in [m]$.  Thus the sets in question are not Type I.  Observe that $N_G[r]\setminus\obs(G,S)=\{r,r'\}$ which agrees with the first claim in the case where $h$ is even.  For each $i\in [m]$, $\textsc{Obs}(G_i,S_i\cup\{r\})=V(G_i);$ however, $N_{G}[r]\setminus \left(\cup_{i\in[m]}\textsc{Obs}(G_i,S_i)\right)=\{r',r\}$ since by inductive hypothesis $N_{G_i}[r]\setminus \textsc{Obs}(G_i,S_i)=\{r\}$.  It follows that $V(G_i)=\textsc{Obs}(G_i,S_i\cup\{r\})=\textsc{Obs}(G_i,S_i\cup\{r'\})$ and therefore 
\begin{align*}\textsc{Obs}(G,S\cup\{r'\}) &\supseteq N_G[r']\cup\left(\bigcup_{i\in [m]}\textsc{Obs}(G_i,S_i\cup \{r'\})\right)\\
&=N_G[r]\cup\left(\bigcup_{i\in [m]}V(G_i)\right)\\&=V(G).
\end{align*}It follows that the $\left\langle\mathcal{H}^{m}\oplus \mathcal{E}^{0}\right\rangle_{m,h-1}^{k}$ sets in question are Type II. This gives the result for $H^k_{m,h}$ in the case that $h$ is even.

Subcase 2: Let $0\leq \ell\leq m-1$ and suppose $r\notin S$.  By inductive hypothesis $N_{G_i}(r_i)\setminus \textsc{Obs}(G_i,S_i)=\{r,q_{i_1}\}$ for each $i\in [\ell]$.  Importantly $\{r_1, r_2, \ldots, r_m\}\subset \textsc{Obs}(G,S)$ and since $S_m$ is Type I, then $r_m\xrightarrow{G_m,S_m}r$. Therefore $r_m\xrightarrow{G,S}r\xrightarrow{G,S}r'$ and subsequently, for each $i\in[\ell]$, $r_i\xrightarrow{G,S}q_{i_1}$ initiates a forcing chain equivalent to the the one in Subcase 1.   It follows that the $\sum\limits_{\ell=0}^{m-1}\left\langle\mathcal{H}^{\ell}\oplus \mathcal{E}^{m-\ell}\right\rangle_{m,h-1}^{k}$ sets in question are Type I. This gives the first summand for $E^k_{m,h}$ in the case that $h$ is even.

Subcase 3:  Let $0\leq \ell\leq m$ and suppose $r\in S$. For each $i\in[\ell]$, $S_i$ is Type II for $G_i$ and therefore $\textsc{Obs}(G_i,S_i\cup\{r\})=V(G_i)$.  It then follows that 
\begin{align*}
    \textsc{Obs}(G,S)
    &=\textsc{Obs}(G,S\cup\{r\})\\
    &=N_G[r]\cup\left(\bigcup_{i\in [m]}\textsc{Obs}(G_i,S_i\cup \{r\})\right)\\
    &=N_G[r]\cup\left(\bigcup_{i\in [m]}V(G_i)\right)\\
    &=V(G).
\end{align*}
It follows that the $\sum\limits_{\ell=0}^{m}\left\langle\mathcal{H}^{\ell}\oplus \mathcal{E}^{m-\ell}\right\rangle_{m,h-1}^{k-1}$ sets in question are Type I. This gives the second summand for $E^k_{m,h}$ in the case that $h$ is even.

\end{proof}
 
\begin{theorem}\label{bigtheorem}
Let $m\geq 2$ and $h\geq 2$.  The number of power dominating sets of size $k$ for $G=T_{m,h}$ is:

$$N(m,h,k)=\begin{cases}
\sum\limits_{\ell=0}^{m-1}\left
\langle \mathcal{H}^\ell\oplus\mathcal{E}^{m-\ell}
\right\rangle^{k}_{m,h-1}+\sum\limits_{\ell=0}^{m}\left
\langle \mathcal{H}^\ell\oplus\mathcal{E}^{m-\ell}
\right\rangle^{k-1}_{m,h-1}, &\textrm{ $h$ is even}\\
\sum\limits_{\ell=0}^{1}\left
\langle \mathcal{H}^\ell\oplus\mathcal{E}^{m-\ell}
\right\rangle^{k}_{m,h-1}+\sum\limits_{\ell=0}^{m}\left
\langle \mathcal{H}^\ell\oplus\mathcal{E}^{m-\ell}
\right\rangle^{k-1}_{m,h-1}, &\textrm{ $h$ is odd}.
\end{cases}$$
\end{theorem}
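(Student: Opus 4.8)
The plan is to partition the power dominating sets $S$ of size $k$ for $G=T_{m,h}$ according to whether $r\in S$, and then to decompose each $S$ along the $m$ extended subtrees $G_1,\dots,G_m$ of $G$, each of which is a copy of $T^+_{m,h-1}$ with root $r_i$ and stem $r$. For each $i$ I set $S_i=(S\cap V(G_i))\setminus\{r\}$. Adapting the argument of Proposition \ref{ZerosPersist} to $T_{m,h}$, if any $S_i$ is Type $0$ then $G_i$ cannot be completely observed even when $r$ is observed, so $S$ is not power dominating; hence I only count configurations in which every $S_i$ is Type I or Type II. Writing $\ell$ for the number of Type II subtrees, Lemma \ref{biglemma} applied to $T^+_{m,h}$ shows that $\left\langle\mathcal{H}^{\ell}\oplus\mathcal{E}^{m-\ell}\right\rangle_{m,h-1}^{k}$ counts exactly the monitor placements on $\bigcup_{i}\bigl(V(G_i)\setminus\{r\}\bigr)=V(G)\setminus\{r\}$ that make precisely $\ell$ of the $G_i$ Type II and the other $m-\ell$ Type I. The argument then reduces to deciding, in each case, which of these configurations are power dominating for $T_{m,h}$.

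When $r\in S$, the vertex $r$ is a monitor, so $N_G[r]=\{r,r_1,\dots,r_m\}$ is observed at the outset and every $r_i$ is observed without any forcing. Consequently the observation inside $G_i$ is exactly $\obs(G_i,S_i\cup\{r\})$, which equals $V(G_i)$ precisely because $S_i$ is Type I or Type II. Thus every such $S$ is power dominating, and since one monitor is spent on $r$, these sets contribute $\sum_{\ell=0}^{m}\left\langle\mathcal{H}^{\ell}\oplus\mathcal{E}^{m-\ell}\right\rangle_{m,h-1}^{k-1}$. This is the second summand appearing in both parities.

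When $r\notin S$, the subtrees interact only through $r$, so the global observation is the union of the sets $\obs(G_i,S_i)$ up until $r$ itself becomes observed. A Type I subtree satisfies $\obs(G_i,S_i)=V(G_i)\ni r$, whereas a Type II subtree leaves $r$ unobserved by Lemma \ref{hugelemma}; hence $r$ becomes observed if and only if at least one subtree is Type I, i.e.\ $\ell\le m-1$. I then split on the parity of $h$ using the location of the unobserved vertices supplied by Lemma \ref{hugelemma} for the Type II subtrees $G_i=T^+_{m,h-1}$. If $h$ is even (so $h-1$ is odd), each Type II $G_i$ has $r_i$ observed, with only the stem $r$ and one child of $r_i$ missing; once $r$ is observed the pre-observed $r_i$ removes the sole obstruction and $\obs(G_i,S_i\cup\{r\})=V(G_i)$ completes $G_i$, so $S$ is power dominating exactly when $\ell\le m-1$, giving $\sum_{\ell=0}^{m-1}\left\langle\mathcal{H}^{\ell}\oplus\mathcal{E}^{m-\ell}\right\rangle_{m,h-1}^{k}$. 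If $h$ is odd (so $h-1$ is even), each Type II $G_i$ has $r_i$ itself unobserved, and $r_i$ cannot be forced from below since its children already sit inside the stable set $\obs(G_i,S_i)$ and observing the external vertex $r$ does not alter those children's neighborhoods; the only route to observing $r_i$ is for the observed vertex $r$ to force it, which requires $r_i$ to be the unique unobserved neighbor of $r$. Since the unobserved neighbors of $r$ are exactly the roots of the Type II subtrees, this succeeds precisely when $\ell\le 1$, giving $\sum_{\ell=0}^{1}\left\langle\mathcal{H}^{\ell}\oplus\mathcal{E}^{m-\ell}\right\rangle_{m,h-1}^{k}$. Adding the $r\in S$ contribution yields the claimed $N(m,h,k)$ in each parity.

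The main obstacle is the distinction between $r$ being a monitor and $r$ being merely observed: as a monitor $r$ observes every child $r_i$ for free, but as an observed vertex it can propagate into a subtree only through a forcing move, which demands a uniqueness condition on its unobserved neighbors. This is exactly what produces the different upper limits $m-1$ and $1$ on $\ell$ in the two parities, and it is where I must argue most carefully, using Lemma \ref{hugelemma} to pin down whether $r_i$ or a deeper vertex is the obstruction and to confirm that no forcing can originate inside a Type II subtree. I also need the small monotonicity remark that the influence of an externally observed $r$ on $G_i$ never exceeds that of $r$ as a monitor, which both justifies discarding configurations with a Type $0$ subtree and underlies the failure of the cases $\ell\ge 2$ (for $h$ odd) and $\ell=m$ (for $h$ even).
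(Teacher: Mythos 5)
Your proposal is correct and follows essentially the same route as the paper's proof: the same decomposition of $T_{m,h}$ into extended subtrees with $S_i=(S\cap V(G_i))\setminus\{r\}$, discarding Type 0 configurations via Proposition \ref{ZerosPersist}, counting with Lemma \ref{biglemma}, and splitting on $r\in S$ versus $r\notin S$ with the parity of $h$ determining, via Lemma \ref{hugelemma}, whether the Type II obstruction sits at $r_i$ or at a grandchild, yielding the cutoffs $\ell\le 1$ and $\ell\le m-1$ respectively. Your explicit remark distinguishing $r$ as monitor from $r$ as merely observed is precisely the mechanism the paper uses in its Cases 2 and 3, so no substantive difference remains.
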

\begin{proof}
Let $G=T_{m,h}$ with root $r$ and root children $r_1, r_2, \ldots, r_m$.  For each $i\in[m]$, let $G_i$ be the extended subtree with stem $r$, root $r_i$ and containing all the descendants of $r_i$. For each $i\in[r]$ let the children of $r_i$ be $\{q_{i_j}\}_{j\in[m]}$.  Choose $S\subseteq V(G)$ and let $S_i=S\cap \left(V(G_i)\setminus\{r\}\right)$. By Proposition \ref{ZerosPersist} we need only consider cases where, for each $i\in[m]$, $S_i$ is a Type I or Type II set for $G_i$. Thus we will assume, without loss of generality, that the first $\ell$ sets are Type II and the remaining $m-\ell$ sets are Type I.

Case 1:  Suppose $r\in S$.  Note that $\textsc{Obs}(G,S)\supseteq\bigcup_{i\in [m]}\textsc{Obs}(G_i,S_i\cup\{r\})=V(G)$.  It follows that for any $0\leq \ell\leq m$, the set $S$ will be a power dominating set and the number of sets in question is $\sum\limits_{\ell=0}^{m}\left
\langle \mathcal{H}^\ell\oplus\mathcal{E}^{m-\ell}
\right\rangle^{k-1}_{m,h-1}$. This gives the second summand in both instances: where $h$ is even or $h$ is odd.

Case 2:  Suppose $h$ is even and $r\notin S$.   Observe that for each $i\in[\ell]$, we have $S_i$ is a Type II set for $G_i$, and consequently $r,q_{i_j} \notin \textsc{Obs}(G_i,S_i)$ for some $j\in [m]$.  Thus, if all the sets are Type II then no child of $r$ can force $r$ and $S$ is not a power dominating set.  On the other hand, if $\ell<m$ then $r_m\xrightarrow{G,S}r$ and for each $i\in [\ell]$, $r_i\xrightarrow{G,S}q_{i_j}$ initiates the same forcing chain that begins with  $r_i\xrightarrow{G_i,S_i\cup\{r\}}q_{i_j}$.  Hence, the $\sum\limits_{\ell=0}^{m-1}\left
\langle \mathcal{H}^\ell\oplus\mathcal{E}^{m-\ell}
\right\rangle^{k}_{m,h-1}$ sets in question are power dominating sets.

Case 3:  Suppose $h$ is odd and $r\notin S$.   Observe that for each Type II set $r,r_i \notin \textsc{Obs}(G_i,S_i)$.  Thus, if $\ell\geq 2$, the vertex $r$ cannot force both $r_1$ and $r_2$ and $S$ is not a power dominating set.  On the other hand, if $0\leq\ell\leq 1$ then $r_m\xrightarrow{G,S}r\xrightarrow{G,S}r_i$ for $i\in [\ell]$, initiating the same forcing chain that begins with  $r\xrightarrow{G_i,S_i\cup\{r\}}r_i$.  Hence, the $\sum\limits_{\ell=0}^{1}\left
\langle \mathcal{H}^\ell\oplus\mathcal{E}^{m-\ell}
\right\rangle^{k}_{m,h-1}$ sets in question are power dominating sets.

\end{proof}

\begin{corollary} Let $G=T_{m,h}$ with $m,h\geq 2$.
The probability that a uniformly at random selected subset of $V(G)$ of size $k$ is a power dominating set is $$p(m,h,k)=\frac{N(m,h,k)}{\binom{|V(G)|}{k}}$$ where $|V(G)|=\frac{m^{h+1}-1}{m-1}$.
\end{corollary}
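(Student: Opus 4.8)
The plan is to observe that this is an immediate consequence of Theorem~\ref{bigtheorem} together with an elementary vertex count, so the argument will have two distinct pieces: identifying the numerator and computing the denominator. First I would invoke the definition of the uniform distribution on size-$k$ subsets: when a subset of $V(G)$ of size $k$ is chosen uniformly at random, each of the $\binom{|V(G)|}{k}$ such subsets occurs with equal probability, so the probability that the chosen subset has a given property equals the number of size-$k$ subsets enjoying that property divided by $\binom{|V(G)|}{k}$. Taking the property to be ``is a power dominating set for $G$,'' the numerator is precisely the quantity counted by Theorem~\ref{bigtheorem}, namely $N(m,h,k)$. This establishes $p(m,h,k)=N(m,h,k)/\binom{|V(G)|}{k}$ once the value of $|V(G)|$ is known.

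The remaining step is to verify that $|V(G)|=\frac{m^{h+1}-1}{m-1}$ for $G=T_{m,h}$. Here I would count the vertices of the complete $m$-ary tree of height $h$ level by level: the root constitutes level $0$ with a single vertex, and since every internal vertex has exactly $m$ children, level $i$ contains $m^i$ vertices for each $i\in[0,h]$. Summing the geometric series gives
$$|V(G)|=\sum_{i=0}^{h}m^i=\frac{m^{h+1}-1}{m-1},$$
where the closed form is valid because $m\geq 2$ guarantees $m\neq 1$. Substituting this expression for $|V(G)|$ into the ratio above yields the claimed formula.

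I do not anticipate any genuine obstacle, since both pieces are routine: the numerator is handed to us directly by Theorem~\ref{bigtheorem}, and the denominator reduces to a standard finite geometric sum. The only point meriting a sentence of care is the justification that uniform selection licenses the passage from counting to probability, i.e.\ that the map from subsets to the event ``$S$ is a power dominating set'' simply partitions the equiprobable sample space; once this is stated, the result follows by combining the two counts.
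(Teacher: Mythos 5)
Your proposal is correct and matches the paper's intent exactly: the paper states this corollary without proof, treating it as immediate from Theorem~\ref{bigtheorem} together with the standard geometric-series count $|V(G)|=\sum_{i=0}^{h}m^i=\frac{m^{h+1}-1}{m-1}$, which is precisely the two-part argument you give. No gaps; your added care about uniform selection partitioning the equiprobable sample space is fine but routine.
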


\begin{example}
We will show that $N(2,2,4)=33$. Using Theorem \ref{bigtheorem} yields \begin{eqnarray}N(2,2,4)&=&\sum_{\ell=0}^{1}\meow{2}{2}{4}{\ell}+\sum_{\ell=0}^{2}\meow{2}{2}{3}{\ell} \nonumber\\
&=&\meow{2}{2}{4}{0}+\meow{2}{2}{4}{1}+\meow{2}{2}{3}{0}+\meow{2}{2}{3}{1}+\meow{2}{2}{3}{2}\nonumber.
\end{eqnarray}
Computing the first summand is done as follows:
\begin{eqnarray}
\left\langle\mathcal{H}^{0}\oplus \mathcal{E}^{2}\right\rangle_{2,1}^{4}&=&\sum\limits_{\substack{\substack{\\ i_{1} \leq i_2}\\ i_1 + i_2=4}}
 \binom{2}{0}\binom{0}{s_0, s_1, \ldots, s_4}\binom{2}{t_0, t_1, \ldots, t_4}\left(\prod\limits_{1\leq j\leq 0} H_{2,1}^{i_j}\right)\left(\prod\limits_{0< j\leq 2} E_{2,1}^{i_j}\right)\nonumber\\
 &=&\sum\limits_{\substack{\substack{\\ i_{1} \leq i_2}\\ i_1 + i_2=4}}
 \binom{2}{t_0, t_1, \ldots, t_4}\left(\prod\limits_{0< j\leq 2} E_{2,1}^{i_j}\right)\nonumber\\
 &=&  \binom{2}{1,0,0,0,1}\left(\prod\limits_{0< j\leq 2} E_{2,1}^{i_j}\right)+
 \binom{2}{0, 1,0,1,0}\left(\prod\limits_{0< j\leq 2} E_{2,1}^{i_j}\right)+
 \binom{2}{0,0,2,0,0}\left(\prod\limits_{0< j\leq 2} E_{2,1}^{i_j}\right) \nonumber\\
 &=&  2\left(E_{2,1}^{0}E_{2,1}^{4}\right)+
 2\left(E_{2,1}^{1}E_{2,1}^{3}\right)+
 1\left(E_{2,1}^{2}E_{2,1}^{2}\right)= 2\cdot 0\cdot 0 +
 2\cdot 1\cdot 1+
 1\cdot 3\cdot 3 =11\nonumber.
\end{eqnarray}
By similar computations we obtain the other four summands:
\begin{eqnarray}
\left\langle\mathcal{H}^{1}\oplus \mathcal{E}^{1}\right\rangle_{2,1}^{4} 
 &=& 
 2\left(H_{2,1}^{0}\right)\left(E_{2,1}^{4}\right)
+2\left(H_{2,1}^{1}\right)\left(E_{2,1}^{3}\right)
+2\left(H_{2,1}^{2}\right)\left(E_{2,1}^{2}\right)
+2\left(H_{2,1}^{3}\right)\left(E_{2,1}^{1}\right)
+2\left(H_{2,1}^{4}\right)\left(E_{2,1}^{0}\right)\nonumber\\
&=& 2\cdot 0\cdot 0+2\cdot 2\cdot 1 + 2 \cdot 0\cdot 3+2\cdot 0\cdot 1+2\cdot 0\cdot 0= 4, \nonumber\\
\left\langle\mathcal{H}^{0}\oplus \mathcal{E}^{2}\right\rangle_{2,1}^{3} 
 &=&
2\left(E_{2,1}^{0}E_{2,1}^{3}\right)+
2\left(E_{2,1}^{1}E_{2,1}^{2}\right)= 2\cdot 0\cdot 1+2\cdot 1\cdot 3=6,\nonumber\\
\left\langle\mathcal{H}^{1}\oplus \mathcal{E}^{1}\right\rangle_{2,1}^{3} 
 &=& 
 2\left(H_{2,1}^{0}\right)\left(E_{2,1}^{3}\right)+
 2\left(H_{2,1}^{1}\right)\left(E_{2,1}^{2}\right)+
 2\left(H_{2,1}^{2}\right)\left(E_{2,1}^{1}\right)+
 2\left(H_{2,1}^{3}\right)\left(E_{2,1}^{0}\right)\nonumber\\
 &=&2\cdot 0\cdot 1+2\cdot 2\cdot 3+2\cdot 0\cdot 1+2\cdot 0\cdot 0= 12, \text{ and}\nonumber\\
\left\langle\mathcal{H}^{2}\oplus \mathcal{E}^{0}\right\rangle_{2,1}^{3} 
&=&2\left(H_{2,1}^{0}H_{2,1}^{3}\right)+
2\left(H_{2,1}^{1}H_{2,1}^{2}\right)= 2\cdot 0\cdot 0+2\cdot 2\cdot 0=0\nonumber.
\end{eqnarray}
Hence, $$N(2,2,4)=11+4+6+12+0=33.$$

Since the inputs are relatively small, we verified $N(2,2,4)=33$ using Theorem \ref{bigtheorem}; however, it is simpler in this case to observe that $\binom{7}{4}=35$ and exactly two subsets of size $4$ fail to be power dominating sets, as illustrated in Figure 2.

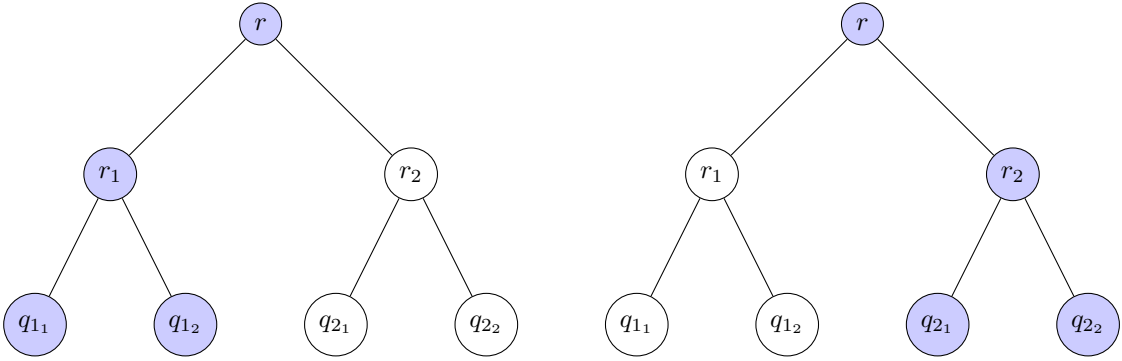
\begin{figure}[h]
\caption{These two subsets of size 4 are the only ones that fail to be power dominating sets.}
\begin{tikzpicture}[auto=center,every node/.style={circle,fill=white}]
        \node[draw, fill=blue!20] (r)     at (0,4) {$r$};
        \node[draw, fill=blue!20] (r1)    at (-2,2) {$r_1$};
        \node[draw] (r2)    at (2,2) {$r_2$};
        \node[draw, fill=blue!20] (q11)   at (-3,0) {$q_{1_1}$};
        \node[draw, fill=blue!20] (q12)   at (-1,0) {$q_{1_2}$};
        \node[draw] (q21)   at (1,0) {$q_{2_1}$};
        \node[draw] (q22)   at (3,0) {$q_{2_2}$};
        \draw (r) -- (r1);
        \draw (r) -- (r2);
        \draw (r1) -- (q11);
        \draw (r1) -- (q12);
        \draw (r2) -- (q21);
        \draw (r2) -- (q22);
        \node[draw, fill=blue!20] (rr)    at (8,4) {$r$};
        \node[draw] (rr1)   at (6,2) {$r_1$};
        \node[draw, fill=blue!20] (rr2)   at (10,2) {$r_2$};
        \node[draw] (qq11)  at (5,0) {$q_{1_1}$};
        \node[draw] (qq12)  at (7,0) {$q_{1_2}$};
        \node[draw, fill=blue!20] (qq21)  at (9,0) {$q_{2_1}$};
        \node[draw, fill=blue!20] (qq22)  at (11,0) {$q_{2_2}$};
        \draw (rr) -- (rr1);
        \draw (rr) -- (rr2);
        \draw (rr1) -- (qq11);
        \draw (rr1) -- (qq12);
        \draw (rr2) -- (qq21);
        \draw (rr2) -- (qq22);
    \end{tikzpicture}
\end{figure}
\end{example}

The previous computation can be a bit cumbersome when done by hand and therefore it is desirable to use a computer.  Assuming constant time for arithmetic operations, $$ \binom{m}{\ell}\binom{\ell}{s_0, s_1, \ldots, s_k}\binom{m-\ell}{t_0, t_1, \ldots, t_k}\left(\prod\limits_{1\leq j\leq \ell} H_{m,h}^{i_j}\right)\left(\prod\limits_{\ell< j\leq m} E_{m,h}^{i_j}\right)$$ is computed in linear time relative to $k$ (since $m<k$).  The complexity of computing $\left\langle\mathcal{H}^{\ell}\oplus \mathcal{E}^{m-\ell}\right\rangle_{m,h}^{k}$ is increased by the complexity of listing the partitions of $k$ and keeping the ones that are the concatenation of two non-increasing sequences.  This can be accomplished by first constructing a tree of all non-decreasing partitions of $k-i$ $(0\leq i\leq k)$ into $j\leq m$ parts by using the constructive recurrence $p(k,m)=p(k-1,m-1)+p(k-m,m)$ which uses less than $2^{k+1}$ iterations since the complete binary tree has $2^{k+1}-1$ vertices. 

It follows that the computational complexity to determine $\left\langle\mathcal{H}^{\ell}\oplus \mathcal{E}^{m-\ell}\right\rangle_{m,h}^{k}$ is $O(m)\cdot (2^{k+1})=2^{O(k)}$.  Since $N(m,h,k)$ requires only $2m+1$ computations equivalent to $\left\langle\mathcal{H}^{\ell}\oplus \mathcal{E}^{m-\ell}\right\rangle_{m,h}^{k}$ (and some constant time arithmetic operations) we have that $N(m,h,k)$ is computed in $2^{O(k)}$ time (exponential with linear exponent).

\section{Future Directions}\label{future}
Power domination in graphs arose from the phase measurement unit problem where it is desirable to place the least number of monitors in an electrical power system.  The desire for minimality of monitors in this problem is motivated by the expense of purchasing and placing such monitors in the grid.  The work on this paper was motivated by considering the additional time cost of determining the minimum number of monitors needed as well as an optimal placement of those monitors.  This problem is known to be to be NP-complete even for planar bipartite graphs (\cite{brueni2005pmu}).  This inspires the following questions.
\begin{question}
For fixed $m,h\geq 2$, what is the minimum value of $k_{\alpha}$ such that  $p(m,h,k_\alpha)\geq \alpha$?  For fixed $m\geq 2$ do the following limits exist $$\lim_{h\rightarrow \infty}\left\{ \left.\min_{k\geq 1}\left\{\frac{k}{n}\right\} \right\vert  p(m,h,k)=1\right\}$$ where $n$ is the number of vertices in the $m$-ary tree?
\end{question}
\begin{question}
Is it possible to compute $N(m,h,k)$ in polynomial time? If so, is there a pattern to optimal placements of the $N(m,h,k)$ monitors?
\end{question}
Determining a closed-form formula for $N(m,h,k)$ (or at least its asymptotic behaviors) would be beneficial in answering the previous questions.  The sequence $\left\{\sum\limits_{k}N(2,h,k)\right\}_{h\geq 1}=\{1,7,94,19192,\ldots\}$ does not appear in the Online Encyclopedia of Integer Sequences (\cite{oeis}).
\begin{question}
Does $\left\{\sum\limits_{k}N(m,h,k)\right\}_{h\geq 1}$ count anything other than power domination sets of any size for a complete $m$-ary tree of height $h$ (when $m\geq 2$)? 
\end{question}



\end{document}